\definecolor{violet}{rgb}{0.0,0.2,0.7}
\definecolor{rouge2}{rgb}{0.8,0.0,0.2}
\tikzset{
  closed/.style = {decoration = {markings, mark = at position 0.5 with { \node[transform shape, xscale = .8, yscale=.4] {/}; } }, postaction = {decorate} },
  open/.style = {decoration = {markings, mark = at position 0.5 with { \node[transform shape, scale = .7] {$\circ$}; } }, postaction = {decorate} }
}
 \theoremstyle{plain}    
 \newtheorem{thm}{Theorem}[section]
\theoremstyle{plain} 
\newtheorem{bigthm}{Theorem}
\newtheorem{bigcoro}[bigthm]{Corollary}
 \numberwithin{equation}{section} 
 \numberwithin{figure}{section} 
 \newtheorem{cor}[thm]{Corollary} 
 \theoremstyle{plain}    
 \newtheorem{prop}[thm]{Proposition} 
 \theoremstyle{plain}    
 \newtheorem{lem}[thm]{Lemma} 
 \theoremstyle{remark}
 \theoremstyle{remark}
 \newtheorem{rem}[thm]{Remark}
 \theoremstyle{definition}
\theoremstyle{plain}  
\newtheorem{setup}[thm]{Setup}
\theoremstyle{plain}
\theoremstyle{definition}
\newtheorem{defi}[thm]{Definition}
\newcommand{\C}{{\mathbb{C}}}
\newcommand{\Q}{{\mathbb{Q}}}
\newcommand{\R}{{\mathbb{R}}}
\newcommand{\cA}{{\mathcal{A}}}
\newcommand{\cC}{{\mathcal{C}}}
\newcommand{\cE}{{\mathcal{E}}}
\newcommand{\cF}{{\mathcal{F}}}
\newcommand{\cG}{{\mathcal{G}}}
\newcommand{\cO}{{\mathcal{O}}}
\newcommand{\cU}{{\mathcal{U}}}
\newcommand{\cV}{{\mathcal{V}}}
\def\1{\mathbf{1}}
\newcommand{\ol}{\overline}
\newcommand{\Xo}{X^\circ}
\newcommand{\wX}{\widehat{X}}
\newcommand{\om}{\omega}
\newcommand{\wF}{\widehat \cF}
\newcommand{\PH}{\mathrm{PH}}
\newcommand{\Xr}{X_{\rm reg}}
\newcommand{\Xs}{X_{\rm sing}}
\newcommand{\ep}{\varepsilon}
\newcommand{\tor}{\mathrm{tor}}
\newcommand{\Tr}{\mathrm{Tr}}
\newcommand{\Id}{\mathrm{Id}}
\newcommand{\ddbar}{\d\dbar}
\newcommand{\wh}{\widehat}
\renewcommand{\d}{\partial}
\newcommand{\dbar}{\overline{\partial}}
\renewcommand{\ge}{\geqslant}
\renewcommand{\le}{\leqslant}
\newcommand{\End}{\operatorname{End}}
\newcommand{\supp}{\operatorname{supp}}
\title[Orbifold Chern classes in singular varieties]{A complex analytic approach to orbifold Chern classes on singular
varieties and its applications}
\date{\today}
\begin{abstract}
In this article, we prove the orbifold version of the Bogomolov-Gieseker inequality for stable $\Q$-sheaves on Kähler varieties, generalizing our earlier work \cite{GP25} in dimension three. We also provide a characterization of the equality case, a new purely analytical proof of the numerical characterization of complex torus quotients as well as a novel, complex analytic interpretation of the second orbifold Chern class associated to a $\Q$-sheaf.  
\end{abstract}
\author{Henri Guenancia}
\address{Univ. Bordeaux, CNRS, Bordeaux INP, IMB, UMR 5251, F-33400 Talence, France}
\email{henri.guenancia@math.cnrs.fr}
\author{Mihai P\u{a}un}
\address{Universit\"at Bayreuth, Mathematisches Institut, Lehrstuhl Mathematik VIII, Germany}
\email{mihai.paun@uni-bayreuth.de}
\begin{document}

\maketitle
 
 \tableofcontents
 
 \section{Introduction}\label{Intro}
 
 \subsection{Main results}
The present article contains general versions of the main results established in our previous work \cite{GP25}
so it can be seen as an addendum to it. Our main result is the following singular version of Bogomolov-Gieseker inequality for stable bundles. 

\begin{bigthm}
\label{thmA}
Let $(X,\omega)$ be a compact Kähler space of dimension $n$ and let $\cF$ be a coherent reflexive sheaf on $X$ of rank $r$ which is $[\omega]$-stable. Assume that there exists a closed analytic subset $Z\subset X$ of codimension at least three such that $X\setminus Z$ has at most quotient singularities and $\cF|_{X\setminus Z}$ has a structure of orbibundle. We have
\[\left(2rc_2(\cF)-(r-1)c_1(\cF)^2\right) \cdot [\omega]^{n-2} \ge 0,\]
and equality holds if and only if $\cF|_{\Xr}$ is locally free and projectively flat. 
\end{bigthm}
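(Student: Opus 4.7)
\emph{Strategy.} Following the approach of our earlier work \cite{GP25} in dimension three, the plan is to equip $\cF$ with a sequence of Hermite--Einstein metrics on a dense open subset where the orbifold structure is genuine, exploit the pointwise Bogomolov--Gieseker inequality arising from Chern--Weil theory under the HE condition, and pass to the limit to recover the intersection number on the left-hand side. Throughout, let $U := X \setminus Z$, which carries only quotient singularities and on which $\cF$ is an orbibundle.

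\emph{Step 1: Hermite--Einstein metrics.} First I would construct a family of Kähler forms $\omega_\e$ on $U$ (or on quasi-étale local covers) approximating $\omega$. Since $\cF$ is $[\omega]$-stable, the orbifold Donaldson--Uhlenbeck--Yau correspondence produces a Hermite--Einstein metric $h_\e$ on $\cF|_U$ with respect to $\omega_\e$. The stability of $\cF$ is preserved under small Kähler perturbations, so the construction is valid uniformly in $\e$; here the orbibundle hypothesis on $\cF|_{X \setminus Z}$ and the quotient nature of the singularities are crucial in order to run DUY equivariantly on the uniformizations.

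\emph{Step 2: Pointwise Chern--Weil inequality and limit.} The HE condition $i\Lambda_{\omega_\e}\Theta_{h_\e} = \lambda_\e \Id$ yields the Kobayashi--Lübke pointwise identity
\[\bigl(2rc_2(\cF,h_\e) - (r-1)c_1(\cF,h_\e)^2\bigr) \wedge \omega_\e^{n-2} = C_n\,\bigl|\Theta^\circ_{h_\e}\bigr|^2_{\omega_\e,h_\e}\,\omega_\e^n \;\ge\; 0,\]
where $\Theta^\circ_{h_\e}$ is the trace-free curvature and $C_n > 0$ is a dimensional constant. Integrating over $U$ and sending $\e \to 0$ produces the desired inequality, provided one can establish
\[\int_U c_2(\cF,h_\e)\wedge \omega_\e^{n-2} \to c_2(\cF)\cdot[\omega]^{n-2}, \qquad \int_U c_1(\cF,h_\e)^2\wedge \omega_\e^{n-2} \to c_1(\cF)^2\cdot[\omega]^{n-2},\]
where the right-hand sides are interpreted via the orbifold Chern class formalism developed earlier in the paper. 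This convergence rests on (i) the codimension-three hypothesis on $Z$, which prevents $Z$ from supporting any nontrivial $(n-2,n-2)$-current contribution; (ii) uniform $L^p$ curvature estimates for $h_\e$ on compact subsets of $U$; and (iii) cut-off arguments near $Z \cup \Xs$. This is the principal obstacle of the proof.

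\emph{Step 3: Equality case.} If equality is realized in the limit, then $\Theta^\circ_{h_\e} \to 0$ in $L^2_{\rm loc}$ on $\Xr$. Stability together with elliptic regularity forces the limit metric $h$ to be a smooth HE metric on $\cF|_{\Xr}$ with vanishing trace-free curvature, hence projectively flat; Uhlenbeck's removable singularities theorem then shows that $\cF|_{\Xr}$ is locally free and carries such a structure. Conversely, if $\cF|_{\Xr}$ is locally free and projectively flat, its Chern--Weil integrand vanishes on $\Xr$ and the codimension-three hypothesis ensures no contribution from the missing locus, so equality holds.
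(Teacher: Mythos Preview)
Your outline captures the Kobayashi--L\"ubke mechanism correctly, but Step~2 is a genuine gap, not merely an acknowledged ``principal obstacle''. The three bullet points you list do not come close to establishing that $\int_U c_2(\cF,h_\e)\wedge\omega_\e^{n-2}$ computes the orbifold Chern number $c_2(\cF)\cdot[\omega]^{n-2}$. Concretely: the HE metric $h_\e$ is in general \emph{not} orbifold smooth near $Z$, so the very definition of the Chern number (via an orbifold metric and a compactly supported representative of $[\omega]^{n-2}$ away from $Z$) does not apply to $h_\e$ directly; a cut-off argument near $Z$ would require controlling the full curvature $\Theta_{h_\e}$ there, and no such bound follows from the codimension hypothesis alone.

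The paper takes a substantially different route and avoids any family or limiting procedure. A single hybrid K\"ahler metric $\omega$ is chosen (smooth in the usual sense near $Z$, genuinely orbifold outside a small neighbourhood of $Z$), and a single HE metric $h_\cF$ is produced via \cite{C++}; this $h_\cF$ is orbifold smooth only away from $Z$. The two key new ingredients are: (a) a decomposition $\omega^{n-2}=\Omega+d\gamma$ with $\Omega$ supported away from $Z$ and $\gamma$ smooth (Proposition~\ref{factor dR}, relying on Andreotti--Grauert type vanishing), which gives a new characterization of the orbifold Chern number as $\int_{X\setminus Z}c_2(\cF,h)\wedge\Omega$ for any orbifold metric $h$; and (b) a Stokes-type result (Theorem~\ref{closed}) asserting that $\int_{\Xr\setminus Z}\Theta\wedge d\alpha=0$ for the full second Chern form $\Theta$ of $h_\cF$ and any $\alpha$ in the class $\cE^{2n-5}(X)$ of forms that are smooth near $Z$ and bounded with respect to $\omega$. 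Since the $\gamma$ in the decomposition lies in this class, (a) and (b) together force $\int_{\Xr\setminus Z}\Delta(\cF,h_\cF)\wedge\omega^{n-2}$ to equal the orbifold Chern number, and the pointwise inequality concludes. The proof of (b) proceeds by an inductive stratification of $Z$ and a careful integration-by-parts argument; this is where the bulk of the work lies, and it is precisely what your cut-off sketch omits.
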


If $X$ is projective and $[\omega]$ is the first Chern class of an ample line bundle, then Theorem~\ref{thmA} can be deduced from the original Bogomolov-Gieseker inequality (and the characterization of the equality case) using Mehta-Ramanathan theorem \cite{MR84} and the singular version of Lefschetz hyperplane theorem \cite{GM88}.  In the analytic setting, Theorem~\ref{thmA} was proved for threefolds in \cite{GP25} and it was obtained in full generality in the very recent article \cite{FuOu} as consequence of two main techniques: the existence of a partial desingularisation for $X$ \cite{Ou,KO}, together with the orbifold version of important developments in MA theory (see especially \cite{GPSS22}) due to J. Scheffler in \cite{Scheffler}. The inequality part (without the characterization of the equality case) has been previously obtained in \cite{Ou,Ou2} and \cite{ZZZ} relying on \cite{Ou, KO}. \\

One of the motivations for establishing the theorem above was to obtain a purely analytical proof of the following.
\begin{bigcoro}
\label{uni4}
Let $X$ be a compact Kähler variety of dimension $n$ with log terminal singularities and $c_1(X)=0 \in H^2(X, \mathbb R)$. The following are equivalent:
\begin{enumerate}
\item There exists a quasi-étale cover $T\to X$ where $T$ is a complex torus. 
\item There exists a Kähler class $\alpha\in H^2(X,\R)$ such that $c_2(X)\cdot \alpha^{n-2}=0$. 
\end{enumerate}
\end{bigcoro}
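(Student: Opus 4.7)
\emph{Proof plan.} The implication $(1) \Rightarrow (2)$ is routine. If $f : T \to X$ is a quasi-\'etale cover from a complex torus, then $f^{*}\alpha$ is a K\"ahler class on $T$, one has $c_{2}(T)=0$ (the tangent bundle of $T$ is trivial), and the functoriality of second Chern classes under quasi-\'etale covers yields
$\deg(f) \cdot c_{2}(X) \cdot \alpha^{n-2} = c_{2}(T) \cdot (f^{*}\alpha)^{n-2} = 0$.

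For $(2)\Rightarrow(1)$, my starting point is the singular Ricci-flat K\"ahler--Einstein metric $\omega_{KE}\in\alpha$ supplied by Eyssidieux--Guedj--Zeriahi on klt K\"ahler varieties with $c_1=0$. This metric induces a Hermite--Einstein metric on $T_{X}|_{\Xr}$, which forces $T_{X}$ to be $\alpha$-polystable of slope zero and yields an orthogonal decomposition $T_{X} = \bigoplus_{i} \cF_{i}$ into $\alpha$-stable reflexive direct summands. To make Theorem~\ref{thmA} applicable to each $\cF_{i}$, I use that klt singularities are quotient in codimension $2$ (Greb--Kebekus--Kov\'acs--Peternell): there is a closed subset $Z\subset X$ of codimension $\ge 3$ such that $X\setminus Z$ has only quotient singularities and each $\cF_{i}|_{X\setminus Z}$ carries a natural orbibundle structure.

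The heart of the argument is the standard identity, for a polystable decomposition of a reflexive sheaf of rank $r=n$ with numerically trivial $c_{1}$:
\[
\Delta(T_{X}) \cdot \alpha^{n-2} \;=\; \sum_{i} \frac{n}{r_{i}}\, \Delta(\cF_{i}) \cdot \alpha^{n-2} \;-\; n\sum_{i} \frac{c_{1}(\cF_{i})^{2}\cdot \alpha^{n-2}}{r_{i}},
\]
where $\Delta(\cF) = 2rc_{2}(\cF)-(r-1)c_{1}(\cF)^{2}$ and $r_{i}=\rk\cF_{i}$. The left-hand side equals $2n\cdot c_{2}(X)\cdot \alpha^{n-2}$. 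Theorem~\ref{thmA} gives $\Delta(\cF_{i})\cdot \alpha^{n-2} \ge 0$, and the Hodge index theorem applied to $c_{1}(\cF_{i})$ (whose $\alpha$-slope vanishes by polystability) gives $c_{1}(\cF_{i})^{2}\cdot \alpha^{n-2} \le 0$, so both sums on the right are nonnegative. The hypothesis $c_{2}(X)\cdot \alpha^{n-2}=0$ therefore forces termwise vanishing: by the equality case of Theorem~\ref{thmA} each $\cF_{i}|_{\Xr}$ is locally free and projectively flat, and by Hodge index equality each $c_{1}(\cF_{i})$ is numerically trivial.

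The final step, and the main obstacle, is to upgrade these structural facts to the existence of a torus cover. After passing to a suitable finite quasi-\'etale cover $Y\to X$ that trivialises the torsion in the line bundles $\det\cF_{i}$, the projectively flat structures glue into a genuine flat connection on $T_{Y}|_{Y_{\reg}}$ compatible with the Ricci-flat metric, so that $(Y_{\reg},\omega_{KE})$ becomes a flat K\"ahler manifold. One then extends the flat affine structure across the codimension $\ge 2$ singular locus of $Y$ and runs a Bieberbach-type argument to identify the universal cover of $Y$ with $\C^{n}$, realising $Y$ as a finite free quotient of a complex torus and hence producing the desired quasi-\'etale torus cover of $X$. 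This final step mirrors the corresponding step of the threefold argument in \cite{GP25}, but in arbitrary dimension it demands careful handling of the cover $Y\to X$ and of the holonomy of the flat structure across the singularities; this is where the bulk of the work will lie.
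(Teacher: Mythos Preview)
Your reduction to the equality case of Theorem~\ref{thmA} is essentially correct and parallels the paper's sketch. The paper invokes \cite{GSS} to get polystability of $T_X$ and records directly that the stable summands have $c_1(\cF_i)=0$; your route via the $\Delta$-formula and the Hodge index inequality is a legitimate alternative that reaches the same conclusion $\Delta(\cF_i)\cdot\alpha^{n-2}=0$, hence $\cF_i|_{\Xr}$ locally free and projectively flat.

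The gap is in your final step. You propose to ``trivialise the torsion in the line bundles $\det\cF_i$'' and then ``extend the flat affine structure across the codimension~$\ge 2$ singular locus'' by a Bieberbach-type argument. Two problems: first, $\det\cF_i$ is a priori only a rank-one reflexive sheaf on a klt space that need not be $\Q$-factorial, so it is not a line bundle and speaking of its torsion is premature. Second, and more seriously, the passage from a flat (or projectively flat) structure on $Y_{\reg}$ to local freeness of $T_Y$ across $Y_{\rm sing}$ is precisely the hard structural fact that the paper imports from the analytic version of \cite{GKP16} (made available via the relative MMP results \cite{Fuj22,LM22,DHP}): on a klt space, a flat sheaf on the regular locus becomes locally free only after a further quasi-\'etale cover, and this is not something a Bieberbach argument alone can produce. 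Once that cover is in hand, the paper concludes via the Zariski--Lipman theorem for klt spaces \cite{Dru14,GKKP} (so $Y$ is smooth) and then Yau's theorem. Your outline omits exactly these three ingredients, and without them the extension step cannot be carried out.
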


The above result dates back to Yau's resolution of the Calabi conjecture \cite{Yau78} when $X$ is smooth. When $X$ is projective and $\alpha$ is the first Chern class of an ample line bundle, the result was proved by \cite{GKP} if $X$ is smooth in codimension two and later \cite{LuTaji} in full generality. The extension to the general Kähler case was proved by \cite{CGG} if $X$ is smooth in codimension two and then by \cite{CGG24} in full generality. However, the latter papers both rely on the algebraic case and the decomposition theorem \cite{BGL}. \\

\subsection{Orbifold Chern numbers : how to compute them}
In this short paragraph, we will briefly explain what are the Chern classes appearing in Theorem~\ref{thmA} and Corollary~\ref{uni4} above and how one can compute them thanks to a new characterization we provide. We refer to Section~\ref{OCC} for further details. 

Since $X$ is singular and $\cF$ need not be locally free, there exist several non-equivalent ways to define Chern classes or Chern numbers associated with $\cF$. In case $Z= \emptyset$, then $X$ is an orbifold, and the Chern-Weil theory applied to $\cF$ allows one to define its orbifold Chern classes. In case $X$ is projective, one can still define $c_1^2(\cF), c_2(\cF)$ seen as multilinear forms on $\mathrm{NS}(X)_{\R}$ using generic hyperplane sections, cf. \cite{MR717614}. In general, they can be defined as elements of the dual space $H^{2n-4}(X,\R)^*$ of the singular cohomology group of $X$ in degree $2n-4$; this was the approach followed by Graf-Kirschner in \cite{GK20}. Our contribution to the subject here is to give an interpretation of $c_1^2(\cF), c_2(\cF)$ via the Dolbeault cohomology of $X$ and relying on the Andreotti-Grauert theorem. More precisely, we prove the following result which might be of independent interest, cf Propositions~\ref{factor}-\ref{well posed} for the Dolbeault version below and Proposition~\ref{factor dR} and the discussion below it for the de Rham version. 

\begin{bigthm}
\label{thm urbi et orbi}
Let $(X,\omega)$ be a compact Kähler variety of dimension $n$ such that there exists a closed analytic subset $Z\subset X$ of codimension at least three such that $X\setminus Z$ has at most quotient singularities. Let $\cF$ be a coherent, torsion-free sheaf on $X$ such that $X\setminus Z$ is an orbibundle. 

\noindent
There exists a decomposition 
\[\omega^{n-2}=\Omega+\dbar \gamma\] 
where $\Omega$ is a smooth $(n-2,n-2)$-form supported away from $Z$ and $\gamma$ is a smooth $(n-2, n-3)$-form. Moreover, one has equality
\[c_2(\cF)\cdot \{\omega\}^{n-2}=\int_{X\setminus Z} c_2(\cF,h) \wedge \Omega\]
for any orbifold hermitian metric $h$ on $\cF|_{X\setminus Z}$. 
\end{bigthm}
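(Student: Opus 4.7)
The plan is to handle the two assertions in turn, using an Andreotti--Grauert type vanishing for the decomposition and a careful Stokes argument for the integral identity.

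\textbf{Decomposition.} The existence of $\omega^{n-2}=\Omega+\dbar\gamma$ with $\Omega$ supported away from $Z$ is equivalent to the statement that $[\omega^{n-2}]\in H^{n-2,n-2}(X)$ lies in the image of the extension-by-zero map $H^{n-2,n-2}_c(X\setminus Z)\to H^{n-2,n-2}(X)$. By Serre duality, this map is dual to the restriction $H^{2,2}(X)\to H^{2,2}(X\setminus Z)$, so it suffices to prove the latter is injective. Passing if needed to the partial desingularization $\pi\colon\wh X\to X$ of \cite{Ou,KO} on which the reflexive sheaf $\Omega^{[2]}$ becomes locally free in the orbifold sense, the local cohomology long exact sequence
\[H^2_{\wh Z}(\wh X,\Omega^{[2]}_{\wh X})\to H^2(\wh X,\Omega^{[2]}_{\wh X})\to H^2(\wh X\setminus\wh Z,\Omega^{[2]}_{\wh X}),\]
combined with Scheja's vanishing $H^i_{\wh Z}=0$ for $i<\codim\wh Z$ (itself a local cohomology consequence of Andreotti--Grauert applied to a fundamental system of neighborhoods of $\wh Z=\pi^{-1}(Z)$), gives the required injectivity, since $\codim\wh Z=\codim Z\ge 3$.

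\textbf{Convergence and independence of $h$.} Since $\Omega$ has compact support in $X\setminus Z$, the integrand $c_2(\cF,h)\wedge\Omega$ is smooth and compactly supported on $X\setminus Z$, so the integral converges absolutely. Independence with respect to $h$ is immediate: for two orbifold metrics $h_0, h_1$, we have $c_2(\cF,h_1)-c_2(\cF,h_0)=\d\dbar\beta$ for a smooth secondary form $\beta$ on $X\setminus Z$ (possibly unbounded near $Z$), and two successive applications of Stokes on the orbifold $X\setminus Z$ yield
\[\int_{X\setminus Z}\d\dbar\beta\wedge\Omega=\int_{X\setminus Z}\dbar\beta\wedge\d\Omega=0,\]
the second equality using $\dbar\d\Omega=0$ (a consequence of $\dbar\Omega=0$). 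No boundary contributions near $Z$ appear because the intermediate forms $\dbar\beta\wedge\Omega$ and $\beta\wedge\d\Omega$ both inherit the compact support of $\Omega$. Independence of the decomposition is more delicate: fixing a reference metric $h_0$ with $c_2(\cF,h_0)$ uniformly bounded in a neighborhood of $Z$, a Stokes exhaustion on $X\setminus U_\varepsilon$ reduces the vanishing of $\int c_2(\cF,h_0)\wedge\dbar(\gamma_1-\gamma_2)$ to the decay of the boundary integrals $\int_{\partial U_\varepsilon}c_2(\cF,h_0)\wedge(\gamma_1-\gamma_2)\to 0$, which follows from uniform boundedness of the integrand and the volume estimate $\vol(\partial U_\varepsilon)=O(\varepsilon^{2\codim Z-1})=O(\varepsilon^5)$.

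\textbf{Identification with the intersection number, and main obstacle.} To identify the common value with $c_2(\cF)\cdot\{\omega\}^{n-2}$, I would work on $\wh X$: $\pi^*\cF|_{X\setminus Z}$ extends to an orbifold reflexive sheaf $\wh\cF$ on $\wh X$, and orbifold Chern--Weil yields $c_2(\wh\cF)\cdot\pi^*\{\omega\}^{n-2}=\int_{\wh X}c_2(\wh\cF,\wh h)\wedge\pi^*\omega^{n-2}$, whose left-hand side agrees with $c_2(\cF)\cdot\{\omega\}^{n-2}$ by functoriality. Pulling back the decomposition and applying orbifold Stokes on the compact $\wh X$ convert the right-hand side into $\int_{\wh X}c_2(\wh\cF,\wh h)\wedge\pi^*\Omega$, which equals $\int_{X\setminus Z}c_2(\cF,h)\wedge\Omega$ since $\pi$ is an isomorphism over $\supp\Omega$ and by the independence from $h$ established above. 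The main obstacle I anticipate is producing the reference metric $h_0$ with bounded $c_2(\cF,h_0)$ near $Z$: because $\cF$ is only coherent (not locally free) on $X$ and its orbibundle structure is defined only on $X\setminus Z$, the construction must go through the partial desingularization $\wh X$ to obtain $h_0$ by pull-back of a globally smooth orbifold metric. The codimension hypothesis $\codim Z\ge 3$ is indispensable both here and in the volume bound $\vol(\partial U_\varepsilon)=O(\varepsilon^5)$ that kills the boundary terms in the second step.
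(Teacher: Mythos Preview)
Your decomposition argument has a genuine gap. You assert $\codim_{\widehat X}\widehat Z=\codim_X Z\ge 3$, but this fails: the partial desingularization $\pi\colon\widehat X\to X$ of \cite{Ou,KO} is an isomorphism only over $X\setminus Z$, and its exceptional locus $\widehat Z=\pi^{-1}(Z)$ is typically a divisor, so $\codim\widehat Z=1$. Scheja-type vanishing for $H^2_{\widehat Z}$ therefore does not apply. The Serre-duality reduction is also unjustified on the singular space $X$, where Dolbeault and coherent cohomology may differ and duality is not automatic. The paper avoids both issues by working directly on $X$: since $\dim Z\le n-3$, the set $Z$ is strongly $(n-2)$-complete \cite{Ohsawa84} and admits a fundamental system of strongly $(n-2)$-complete neighborhoods $\cV$ \cite{DemQconvex}; Andreotti--Grauert then gives $H^{n-2}(\cV,\Omega_X^{n-2})=0$, so the coherent class $[\omega]^{n-2}\in H^{n-2}(X,\Omega_X^{n-2})$ restricts to zero on $\cV$, and the Leray map (Proposition~\ref{coh to dol}) converts this into $\omega^{n-2}|_{\cV}=\dbar\gamma_{\cV}$. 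Cutting off $\gamma_{\cV}$ yields the global decomposition. No passage to $\widehat X$ is needed for this step.

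For the integral identity your overall strategy agrees with the paper's. The independence-from-$h$ step is the same (the paper uses only $c_2(E,h)-c_2(E,h')=\dbar\alpha$ and $\dbar\Omega=0$, rather than a $\d\dbar$-transgression). The identification with $c_2(\cF)\cdot\{\omega\}^{n-2}$ via pull-back to $\widehat X$ and orbifold Stokes is exactly Proposition~\ref{well posed}. Your boundary-exhaustion argument for independence-from-$\Omega$ is a reasonable alternative route, but note that it still requires the partial resolution to produce a metric $h_0$ with $c_2(\cF,h_0)$ bounded near $Z$; the paper instead pulls back $\dbar(\gamma_1-\gamma_2)$ to $\widehat X$ and applies Stokes there in one line, which is cleaner and isolates the role of the partial resolution. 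The paper explicitly remarks that it is unclear whether the class of $\Omega$ in $H^{n-2,n-2}_{\dbar,c}(X\setminus Z)$ is well defined without invoking $\widehat X$.
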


The motivation for the result above is to have a definition for which the properties of the said Chern classes can be analysed by methods of geometric analysis. This result is crucially needed for the proof of Theorem~\ref{thmA}. Note that its content is more or less trivial when $n=3$ as in that case $Z$ is a finite collection of points and $\omega$ is then $dd^c$-exact near $Z$ so that $\Omega$ can be found "by hand", as explained e.g. in \cite{GK20} or \cite{GP25}. 

To put things into perspective, Graf and Kirschner use some "abstract" dimensional considerations to show that the class $\{\omega\}^{n-2} \in H^{2n-4}(X, \R)$ can be naturally represented by a singular chain $\sigma$ supported away from $Z$. Now, the de Rham-Weil isomorphism allows  to associate to $\sigma$ an orbifold form $\Omega'$ of degree $2n-4$ supported away from $Z$. There was, however, no clear relation exhibited between the {\it differential forms} $\omega^{n-2}$ and $\Omega'$ in \cite{GK20}, but this is now achieved thanks the de Rham version of Theorem~\ref{thm urbi et orbi}, i.e. Proposition~\ref{factor dR}.

\subsection{What goes on in the proof of Theorem~\ref{thmA}}
If $X$ is smooth and $\cF$ is locally free, the $[\omega]$-stability of $\cF$ coupled with Donaldson-Uhlenbeck-Yau theorem guarantees the existence of a Hermite-Einstein metric $h_{\cF}$ with respect to $\omega$. A classical computation shows that one has 
\[(2rc_2(\cF, h_{\cF})-(r-1)c_1^2(\cF, h_{\cF}))\wedge \omega^{n-2}\ge 0\] 
pointwise, and one can then integrate over $X$ to get the expected result. 

When $X$ and $\cF$ are singular, one can still find a Hermite-Einstein metric $h_{\cF}$ with respect to $\omega$ on $X_{\rm reg}\setminus Z$ thanks to \cite{C++}, hence the above inequality will hold pointwise on that set. Now, two problems arise before one can get any further: 
\begin{enumerate}[label=$\alph*.$]
\item $h_{\cF}$ need not be an orbifold hermitian metric on $X\setminus Z$. 
\item Even though the integral $\int_{X_{\rm reg}\setminus Z} c_2(\cF,h_{\cF}) \wedge \omega^{n-2}$ converges, it need not compute the orbifold Chern number $c_2(\cF)\cdot \{\omega\}^{n-2}$ because of the first item and the fact that $\mathrm{supp}(\omega^{n-2})=X$.  
\end{enumerate}

\medskip
In order to deal with $a.$, we pick a small open neighborhood $U\supset Z$ of $Z$ in $X$ and we now choose $\omega$ to be a metric on $X$ which has orbifold singularities in $X\setminus U$ and which is moreover smooth in the usual sense near $Z$. Such an ambient metric may look artificial, but in some sense it is "the best" we can get if $Z$ is non-empty. Moreover, the results in \cite{C++} still apply and yield a hermitian metric $h_{\cF}$ which is an orbifold metric on $X\setminus U$. \\

The second issue, $b.$, is much thornier as we now explain. We have a well-defined curvature form $\Theta(\cF, h_\cF)$ on $X_{\rm reg}\setminus Z$, which is orbifold smooth on $X\setminus U$. If one unravels the characterization of orbifold Chern numbers as provided by Theorem~\ref{thm urbi et orbi}, we see that for the equality
\[c_2(\cF)\cdot \{\omega\}^{n-2}=\int_{X_{\rm reg}\setminus Z} c_2(\cF, h_{\cF})\wedge \omega^{n-2}\]
to hold (from which Theorem~\ref{thmA} would follow), we need to have the following Stokes formula
\begin{equation}\label{intro1}
\int_{X_{\rm reg}\setminus Z}\Theta\wedge \dbar \alpha= 0
\end{equation}
where \[\Theta:= \Tr\big(\Theta(\cF, h_\cF)\wedge \Theta(\cF, h_\cF)\big),\] 
and $\alpha$ is a smooth form on $\Xr$ which is smooth near $Z$ and bounded with respect to our hybrid metric $\omega$. Surely, \eqref{intro1} will hold if $\supp(\alpha)\subset X\setminus U$. However, this condition will never be met by the specific $\alpha$ we want to plug in \eqref{intro1}. A good chunk of this paper is devoted to proving \eqref{intro1}, see Theorem~\ref{closed}. This relies in part on ideas already developed in \cite{GP25} (see Section~\ref{sec GP}) and in part on new arguments which are expanded in Section~\ref{sec closed}.

\subsection*{Acknowledgments} We would like to thank Benoît Claudon for enlightening discussions about Proposition~\ref{factor dR}. We also warmly thank J\'anos Koll\'ar for generously sharing his insights on a wide range of topics in this article.

\subsection*{Funding}This material is partially based upon work supported by the National Science Foundation under Grant No. DMS-1928930 while H.G. was in residence at the Simons Laufer Mathematical 
Science Institute (former MSRI) in Berkeley, California, during the Fall 2024 semester. H.G. is partially supported by the French Agence Nationale de la Recherche (ANR) under reference ANR-21-CE40-0010 (KARMAPOLIS). M.P. is gratefully acknowledging support from the Deutsche Forschungsgemeinschaft (DFG).

 \section{Notations and setup}\label{Noset}
Throughout the text, we will work in the following context. \\

$\bullet$ {\it Varieties}. Let $(X,\omega_X)$ be a compact normal Kähler variety of dimension $n\ge 2$. This means that $\omega_X$ is a Kähler form on the regular locus $\Xr$ which, under any local embedding of $X$ into the euclidean space, extends to a Kähler form there.  
Assume furthermore that there exists a closed analytic subset $Z\subset X$ such that 
\begin{itemize}
\item $X\setminus Z$ has at most quotient singularities,
\item $Z$ has codimension at least three.
\end{itemize}

\medskip

$\bullet$ {\it Sheaves}. Given $X$ as above, we say that a coherent, torsion-free sheaf $\cF$ of rank $r$ on $X$ is a $\Q$-sheaf if $\cF$ is reflexive and $\cF|_{X\setminus Z}$ is an orbibundle, maybe up to enlarging the analytic $Z$ introduced above (as long as its codimension does not exceed three). From now on, we will assume that 
\begin{itemize}
\item$\cF$ is a $\Q$-sheaf,
\item $\cF$ is $\omega_X$-stable,
\end{itemize} 
 
Let us recall the terminology used here. That $\cF$ is reflexive means that the natural map $\cF \to \cF^{**}$ is an isomorphism. Next, since $X\setminus Z$ has quotient singularities, any point $x\in X\setminus Z$ admits a {\it uniformizing chart}. That is, there exists a euclidean neighborhood $U$ of $x$ along with a finite Galois cover $p:V\to U$ such that $V\simeq B_{\mathrm C^n}(0,1)$. One says that $\cF|_{X\setminus Z}$ is an {\it orbibundle} if for any $x\in X\setminus Z$ and any uniformizing chart $p$ near $x$, the sheaf $\cE:=p^{[*]}\cF:=p^*(\cF)^{**}$ is locally free. 

\smallskip

We fix once and for all a proper, bimeromorphic surjective map $\pi: \wX\to X$ such that $\wX$ is smooth and $\wF:=\pi^*\cF/{\rm tor}$ is locally free. Such a map $\pi$ exists thanks to Hironaka's resolution of singularities and a result of Rossi \cite{Rossi68}.  The slope of $\cF$ with respect to $\omega_X$ is defined as $\mu_{\omega_X}(\cF)=\frac{1}{r} c_1(\wF)\cdot [\pi^*\omega_X]^{n-1}$. It is easy to check that the latter quantity does not depend on the chosen map $\pi$ satisfying the two required properties. Moreover the dependence of the slope on $\omega_X$ is only through its cohomology class $\kappa(\omega_X) \in H^1(X,\mathrm{PH}_X)$, cf \eqref{kappa}. Finally, we say that $\cF$ is $\omega_X$-stable if for any coherent subsheaf $\cG$ of $\cF$ with $\mathrm{rank}(\cG)\neq 0, r$, we have $\mu_{\omega_X}(\cG)<\mu_{\omega_X}(\cF)$. We refer to e.g. \cite[Section~2]{CGG} for further details.

\medskip

$\bullet$ {\it Metrics}. Let $\theta: X\to [0,1]$ be a cutoff function of $Z\subset X$. That is, we have $\theta \equiv 1$ away from $Z$ and $\mathrm{Supp}(\theta)\subset \Xo=X\setminus Z$. Fix two open sets $W'\subset W \Subset \Xo$ such that $W'\subset (\theta=1)$ and $\mathrm{Supp}(\theta)\Subset W$. We denote by $\cU$ a small neighborhood of $X\setminus W$. It is standard  (cf e.g. \cite[Lemma~11]{CGG24}) to construct a continuous function $\varphi_{\rm orb}:W\to \mathbb R$ such that $\omega_{\rm orb}:=\omega_X+dd^c \varphi_{\rm orb}$ is an orbifold Kähler metric on $W$. More precisely, it means that ${\omega_{\rm orb}}|_{W_{\rm reg}}$ is a Kähler metric and that $p^*\omega_{\rm orb}|_{U\cap W_{\rm reg}}$ extends to a Kähler metric on $V\cap p^{-1}(U\cap W)$ for any uniformizing chart $p$. We then choose $\ep_0>0$ small enough and consider 
\begin{equation}
\label{omega}
\omega:=\omega_X+dd^c(\ep_0\theta \varphi_{\rm orb}).
\end{equation}
By construction, $\omega$ is a Kähler metric near $Z$ and an orbifold Kähler metric on $W$. Moreover, it is not difficult to check that there exists $C>0$ such that
\begin{equation}
\label{qiso}
C^{-1}(\omega_X+\theta \omega_{\rm orb})\le \omega \le C(\omega_X+\theta \omega_{\rm orb}).
\end{equation} 

By \cite{C++}, there exists a smooth hermitian metric $h_{\cF}$ on $\cF|_{\Xr\setminus Z}$ such that its Chern curvature $i\Theta(\cF, h_{\cF})$ satisfies
\begin{equation}
\label{HYM}
\Lambda_\omega i\Theta(\cF, h_{\cF})  = \mu \mathrm{Id}_{\cF} \quad \mbox{on } \Xr \setminus Z,
\end{equation}
for some $\mu \in \mathbb R$. Moreover, $h_\cF$ extends to a orbifold smooth hermitian metric on $W$. That is, for any uniformizing chart $p:V\to U$, the hermitian metric $p^*h_{\cF}|_{U_0\cap W}$ on $\cE$ extends smoothly to $V\cap p^{-1}(U\cap W)$.  Finally, we have 
\begin{equation}
\label{L2}
\int_{\Xr \setminus Z} |\Theta(\cF, h_{\cF})|^2_{h_{\cF}, \omega}\,  \omega^n <+\infty.
\end{equation}

\section{Recollection from \cite{GP25}$+\ep$}
\label{sec GP}

\noindent In this section -and the following one- our main goal is to study the integrability properties of the Chern-Weil representatives of the first two
Chern classes of a coherent, reflexive sheaf $\cF$. The metrics we will consider on the ambient space $X$ and on the sheaf $\cF$ are required to satisfy certain properties we are next listing. As the title of the current section suggests, we intend to follow the approach in 
\cite{GP25} almost \emph{ad literam}: we will simply expose the general outline of the method, and show that it can be easily implemented in our current context. 
\begin{enumerate}

\item[(a)] Let $\omega_X$ be a non-singular Kähler metric on $X$. By using the notations in Section \ref{Noset}, we have a metric 
$\omega$ with the properties \eqref{omega} and \eqref{qiso}.

\item[(b)] The local resolutions of $\cF$ together with a partition of unity provide a reference metric $h_{\cF, 0}$ for the vector bundle $\cF|_{\Xr\setminus Z}$. It has a few interesting properties, most notably it induces a smooth metric on the desingularisation of $\cF$.

\item[(c)] Given a positive constant $C> 0$, we consider the class of metrics $h_\cF:= h_{\cF, 0}\exp(s)$ on $\cF|_{\Xr\setminus Z}$ with orbifold singularities on $W$, such that the following inequalities
\[\sup_{\Xr\setminus Z}|\Lambda_{\omega}\Theta(\cF, h_\cF)|_{h_\cF}\leq C, \qquad \int_{\Xr\setminus Z}|\Theta(\cF, h_\cF)|^2_{h_\cF, \omega}dV_{\omega}\leq C\]
together with 
\[\Tr \exp(s)\leq C,\qquad \exp(s)\geq e^{C\phi}\Id\]
hold. Notice that this class is non-empty, at least if $C\gg 0$, in case of an $\{\omega_X\}$-stable sheaf $\cF$, cf. \cite{GP25}. In the inequality above, we denote by $\phi$ a quasi-psh function with log-poles on $X$, such that $\{\phi= -\infty\}=\Xs\cup Z$ coincides with the singular loci of $X$ and $\cF$.
\end{enumerate}

\begin{rem} Let $X$ be a compact Kähler space and let $\cF$ be a coherent, torsion-free sheaf on $X$ as in Section \ref{Noset}. It would be very interesting to construct a metric $h_\cF$ on $\cF$ satisfying the properties listed at the point (c) above. A good candidate would be the metric $h_{\cF, 0}$ mentioned at (b), but it is unclear if the first $L^\infty$ property holds in this case. 
\end{rem}

\medskip

\noindent An important technical point is that in the complement of the set $Z$, the space $X$ has quotient singularities. More precisely, up to enlarging $Z$, one can assume that for any point $x\in X\setminus Z$ there exists an open subset $U\ni x$ such that 
\begin{equation}\label{eq-1}
(U, x)\simeq (\C^{n-2}, 0)\times (S, s),
\end{equation}
where $(S,s)$ is the germ of a surface with quotient singularities, see e.g. beginning of \textsection~9.1 in \cite{Joyce00}.  We can assume that $U$ is contained in a uniformising chart.
\medskip

\noindent Another notion which comes into play is the $\cC^\infty_X$-module $\cE^p$, defined as follows. For any open set $U\subset X$,  $\cE^p(U)$ consists of $p$-forms $\alpha\in C^{\infty}(U_{\rm reg},\mathcal A_X^p)$ satisfying 
\begin{enumerate}
\item $\alpha$ extends smoothly to a neighborhood of $U\cap Z$, 
\item $\sup_{U\cap \Xr} (|\alpha|_{\omega}+|d\alpha|_{\omega})<+\infty$.
\end{enumerate}
Note that $\alpha$ and $d\alpha$ are automatically bounded with respect to $\omega$ near $Z$ thanks to the first condition. Finally, we denote by $\cE^p_0\subset \cE^p$ the submodule of forms in $\cE^p$ supported away from $Z$. In other words, they are smooth $p$-forms $\alpha$ on $\Xr$ supported away from $Z$ such that $\alpha$ and $d\alpha$ are globally bounded with respect to $\omega$. 
\medskip

\noindent In this context, we have the following result.
\begin{thm}
\label{dbar closed}
Set $\Theta:=\mathrm{Tr}(\Theta(\cF, h_{\cF})\wedge \Theta(\cF, h_{\cF}))$. Then for any $\alpha \in \cE^{2n-5}_0(X)$, we have
\[\int_{\Xr} \Theta \wedge d \alpha=0.\]
\end{thm}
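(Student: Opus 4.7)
The plan is a Stokes-type argument on $\Xr$ combined with a carefully chosen exhaustion of $\Xr\setminus\Xs$ by cutoffs. Two observations drive it: first, the differential Bianchi identity $\nabla\Theta(\cF,h_\cF)=0$ combined with the cyclic property of the trace gives $d\Theta=0$ on $\Xr\setminus Z$; second, since $\alpha\in\cE^{2n-5}_0(X)$ is supported away from $Z$, the integrand $\Theta\wedge d\alpha$ lies in $L^1(\Xr)$ thanks to $|\Theta|_\omega\lesssim|\Theta(\cF,h_\cF)|^2_{h_\cF,\omega}$, the $L^2$-bound \eqref{L2}, and the $\omega$-boundedness of $d\alpha$.

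I then introduce cutoffs $\chi_\delta:X\to[0,1]$ built from the quasi-psh function $\phi$ of item~(c): $\chi_\delta\equiv 0$ in a shrinking neighborhood of $\{\phi=-\infty\}=\Xs\cup Z$ and $\chi_\delta\to 1$ pointwise on $X\setminus(\Xs\cup Z)$. Then $\chi_\delta\,\Theta\wedge\alpha$ has compact support in $\Xr\setminus Z$ and is smooth there, so Stokes' theorem applies and yields
\[0=\int_{\Xr}d(\chi_\delta\,\Theta\wedge\alpha)=\int_{\Xr}d\chi_\delta\wedge\Theta\wedge\alpha+\int_{\Xr}\chi_\delta\,\Theta\wedge d\alpha,\]
using $d\Theta=0$ and the evenness of $\deg\Theta$. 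Dominated convergence sends the second term to $\int_{\Xr}\Theta\wedge d\alpha$, so the proof reduces to showing $\int_{\Xr}d\chi_\delta\wedge\Theta\wedge\alpha\to 0$ as $\delta\to 0$.

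This last step is the technical heart of the matter. Cauchy--Schwarz together with the pointwise bound $|\Theta|_\omega\lesssim|\Theta(\cF,h_\cF)|^2$ reduces the task to proving
\[\int_{\Xr}|d\chi_\delta|_\omega^2\,|\Theta(\cF,h_\cF)|^2\,dV_\omega\longrightarrow 0,\]
the remaining factor being controlled by \eqref{L2}. This is achieved by choosing cutoffs of the form $\chi_\delta=\chi(\phi/\log\delta)$ (or an analogous log-rescaling), whose gradient satisfies $|d\chi_\delta|_\omega=O(|\log\delta|^{-1})\,|d\phi|_\omega$ on a shrinking logarithmic annulus. On a log resolution $\pi:\wX\to X$ of $\Xs\cup Z$, a direct local computation near the exceptional SNC divisor shows the required decay, with the $L^2$-control \eqref{L2} of $\Theta(\cF,h_\cF)$ providing the necessary integrability on the annular region. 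This is precisely the scheme carried out in \cite{GP25}, which transports almost verbatim to the present, more general setting: the orbifold compatibility \eqref{qiso} of $\omega$ on $W$, the smoothness of $\omega$ near $Z$, and the $L^2$-bound \eqref{L2} all play identical roles. The main obstacle is therefore the gradient estimate for $\chi_\delta$, and this is inherited from the \cite{GP25}-toolkit.
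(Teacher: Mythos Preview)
Your overall architecture (Bianchi gives $d\Theta=0$ on $\Xr\setminus Z$, introduce cutoffs $\chi_\delta$ for $\Xs$, Stokes, then kill the boundary term) coincides with the paper's. The gap is in the last step: the claim that
\[
\int_{\Xr}|d\chi_\delta|_\omega^{2}\,|\Theta(\cF,h_\cF)|_\omega^{2}\,dV_\omega\longrightarrow 0
\]
follows from the $L^2$ bound \eqref{L2} and a log-type cutoff is not justified. The only information available on the curvature is $|\Theta(\cF,h_\cF)|^2\in L^1$; for a codimension-two locus (which is what $p^{-1}(U_{\rm sing})$ is in the uniformizing chart $V$, with respect to the smooth metric $\omega_V$) the gradient $|d\chi_\delta|_{\omega_V}$ of any cutoff with shrinking support is \emph{unbounded} --- for the log cutoffs you propose one has $|d\chi_\delta|\sim (r\,|\log r|)^{-1}$ on the annulus. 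Thus neither dominated convergence nor absolute continuity of the $L^1$ mass of $|\Theta(\cF,h_\cF)|^2$ applies, and the appeal to ``a direct local computation near the exceptional SNC divisor'' on a log resolution does not help: nothing is known about $h_\cF$ beyond the properties in~(c), so the pulled-back curvature has no better integrability there.

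The paper gets around this obstruction by a different mechanism. On the uniformizing chart one writes $\Theta(\cE,h_\cE)=\dbar A$ and integrates by parts \emph{once more}, so the boundary term becomes $\int_V \Tr(A\wedge\Theta(\cE,h_\cE))\wedge p^*(\dbar\chi_\ep\wedge\dbar\alpha)$. This trades one factor of curvature for the connection form $A$. The point is that $|A|^2$ can be controlled: the $C^0$ estimate of Proposition~\ref{h orbi} gives $|A|^2_{h_\cE,\omega_V}\le C\sum_i|D'e_i|^2_{h_\cE,\omega_V}$, and the Bochner identity $\Delta|e_i|^2=|D'e_i|^2-\langle\Lambda_{\omega_V}\Theta\,e_i,e_i\rangle$ together with the $L^\infty$ bound on $\Lambda_\omega\Theta$ in~(c) converts $\int\psi_\ep|A|^2$ into integrals of $\psi_\ep$, $|\partial\psi_\ep|$, $|\Delta''_{\omega_V}\psi_\ep|$ (where $\psi_\ep=|p^*\dbar\chi_\ep|^2$). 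One then needs these to stay \emph{bounded}, not to vanish, and this is what the explicit cutoffs adapted to the product structure $(\mathbb C^{n-2},0)\times(S,s)$ in steps~(ii)--(iii) deliver. The remaining curvature factor is handled by Cauchy--Schwarz and the $L^2$ bound \eqref{L2}, now on a region of shrinking support. In short, your Cauchy--Schwarz is applied one integration by parts too early; the missing ingredients are precisely Proposition~\ref{h orbi} and the Bochner/IBP reduction to \eqref{3folds7}.
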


\noindent
Notice that, since $\Theta$ is real of type $(2,2)$, this is equivalent to proving that the equality \[\int_{\Xr} \Theta \wedge \dbar \alpha=0\] 
holds true for any $\alpha \in \cE^{n-2,n-3}_0(X)$. Note that $\Theta$ is a well-defined current on $X$ thanks to the second property in $(c)$ above, as $\omega$ dominates $\omega_X$. Theorem~\ref{dbar closed} implies that $d\Theta=0$ on $X\setminus Z$, but it is slighlty stronger since we allow more exact test forms than just the smooth ones. The proof is explained in a certain number of steps, detailed in the coming subsections.

\subsection{Cutoff functions and a Harnack-type result} As consequence of the local structure \eqref{eq-1} near an arbitrary point of $X\setminus Z$ we can construct very useful cutoff functions as in \cite[Lemma~5.9]{GP25}.

\begin{lem}
\label{cutoff}
Let $x\in X\setminus Z$ be an arbitrary point. Then, there exist $\ep_0>0$, a neighborhood $U$ of $x$ and a family of cut-off functions $(\rho_\delta)$ for $U_{\rm sing}= X_{\rm sing}\cap U$ such that 
\[\limsup_{\delta \to 0} \int_{U} \big( |\nabla \rho_\delta |_{\omega_{}}^{2+\ep_0}+|\Delta_{\omega} \rho_\delta |^{1+\ep_0}\big) \omega^n =0.\]
\end{lem}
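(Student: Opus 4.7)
The plan is to reduce the question to an elementary model via the product structure \eqref{eq-1} and a uniformising cover, and then to construct a standard radial cutoff whose gradient and Laplacian concentrate in a shell whose volume is governed by the real codimension four of $U_{\rm sing}$.

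First, I would shrink $U$ so that $U \simeq B \times S_0$ with $B \subset \C^{n-2}$ a small ball and $(S_0,s)$ the germ of a surface with at worst a quotient singularity at $s$. If $s$ is a smooth point of $S_0$, then $U_{\rm sing}=\emptyset$ and any constant function works, so one may assume $s$ is a genuine quotient singularity. In that case $x = (0,s)\in \Xs$ lies in $W$, hence $\omega$ is an orbifold Kähler metric on $U$. Writing the uniformising chart as $\wt U = B \times \wt S_0 \subset \C^{n-2}\times \C^{2}$ with a finite group $G$ acting unitarily on the $\C^2$-factor and trivially on $B$, the pullback of $\omega$ to $\wt U$ is a smooth Kähler form, hence quasi-isometric to the Euclidean metric on a slightly smaller neighborhood.

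Next, I would fix a smooth profile $\chi\in C^{\infty}([0,+\infty),[0,1])$ with $\chi\equiv 0$ on $[0,1/2]$ and $\chi\equiv 1$ on $[1,+\infty)$, and set
\[\wt\rho_{\delta}(z,w) := \chi\!\left(|w|^{2}/\delta^{2}\right) \quad \text{on } \wt U.\]
This is $G$-invariant, so it descends to a function $\rho_{\delta}$ on $U$ that vanishes on a $(\delta/\sqrt 2)$-neighborhood of $U_{\rm sing}$ and equals $1$ outside a $\delta$-neighborhood. A direct Euclidean computation yields the pointwise bounds $|\nabla \wt\rho_{\delta}|\le C/\delta$ and $|\Delta \wt\rho_{\delta}|\le C/\delta^{2}$, both supported in the shell $A_{\delta} := B \times \{w\in \wt S_0: \delta/\sqrt 2 \le |w|\le \delta\}$. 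By the quasi-isometry noted above, the corresponding bounds hold for $|\nabla \rho_{\delta}|_{\omega}$ and $|\Delta_{\omega}\rho_{\delta}|$ on $U$, up to a universal multiplicative constant.

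The heart of the matter is the codimension-four volume estimate $\vol_{\omega}(A_{\delta}/G) \lesssim \delta^{4}$, which on the cover is simply the Euclidean volume of a spherical shell in the $\C^{2}$-factor. Combining the pointwise bounds with this volume estimate gives
\[\int_{U}|\nabla \rho_{\delta}|_{\omega}^{2+\ep_0}\,\omega^{n} \lesssim \delta^{-(2+\ep_0)}\cdot \delta^{4} = \delta^{2-\ep_0},\]
\[\int_{U}|\Delta_{\omega}\rho_{\delta}|^{1+\ep_0}\,\omega^{n} \lesssim \delta^{-2(1+\ep_0)}\cdot \delta^{4} = \delta^{2-2\ep_0},\]
both of which tend to $0$ as $\delta\to 0$ for any $\ep_0\in(0,1)$, establishing the desired $\limsup$ identity. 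The only technical subtlety is making sure that the real codimension of $U_{\rm sing}$ inside the uniformising cover is indeed $4$, which relies crucially on \eqref{eq-1} and on quotient singularities of complex surfaces being isolated; beyond that, no serious obstacle is expected.
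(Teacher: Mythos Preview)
Your argument is essentially correct and is precisely the line of reasoning behind \cite[Lemma~5.9]{GP25}, to which the paper simply defers: the product structure \eqref{eq-1} reduces everything to a radial cutoff on the uniformising cover, where the singular stratum has real codimension four and the shell-volume estimate $\delta^4$ beats the pointwise blow-ups $\delta^{-(2+\ep_0)}$ and $\delta^{-2(1+\ep_0)}$ for any $\ep_0\in(0,1)$.

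One caveat worth flagging: the assertion ``$x\in\Xs$ lies in $W$'' is not automatic from the setup, since $W\Subset X\setminus Z$ is a fixed relatively compact open set while $\Xs\cap(X\setminus Z)$ may well accumulate on $Z$. What your argument actually uses is that $p^*\omega$ extends to a smooth \emph{Kähler} form on the uniformising chart, and by \eqref{qiso} this holds as soon as $\theta>0$ on $U$ (since then $p^*\omega\geq C^{-1}\theta\,p^*\omega_{\rm orb}>0$). In the residual case $\theta\equiv 0$ near $x$ one has $p^*\omega=p^*\omega_X$, which degenerates along the ramification locus, and the direct Euclidean comparison is no longer available. This is not a serious obstacle---in every subsequent use of the lemma (Corollary~\ref{LI estimates theta}, Theorem~\ref{dbar closed}) one works on a fixed compact subset of $X\setminus Z$ and may arrange the cutoff $\theta$ beforehand so that the relevant points satisfy $\theta=1$---but it should be acknowledged rather than asserted away.
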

\medskip

\noindent The arguments involved in the proof of \cite[Lemma~5.9]{GP25} are completely general, in particular they do not 
depend of the fact that $\dim X= 3$ in \emph{loc. cit.} The same comment applies to the following Harnack-type result, cf   \cite[Corollary~5.11]{GP25}.

\begin{cor}
\label{LI estimates theta}
Let $x\in X\setminus Z$. Then, there exist a neighborhood $U$ of $x$ and a number $p>1$ satisfying the following. For any non-negative function $f\in \mathcal C^2(U_{\rm reg})$ whose support is relatively compact in $U$, one has
\[f+(\Delta_{\omega} f)_- \in L^p(U_{\rm reg},\om^n) \quad \Longrightarrow \quad f\in L^{\infty}(U_{\rm reg}).\]
\end{cor}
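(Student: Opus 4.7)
The plan is to port the classical Moser $L^{p}$-to-$L^{\infty}$ estimate for non-negative subsolutions of $-\Delta u\le g$ to our singular setting, with Lemma~\ref{cutoff} providing the device that justifies integration by parts across the quotient singularity.

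First, shrink $U$ so that it is contained in a uniformising chart at $x$: there is a finite Galois cover $\pi\colon V\to U$ with $V\simeq B_{\C^{n}}(0,1)$ and such that $\wt{\omega}:=\pi^{\ast}\omega$ extends to a \emph{smooth} Kähler metric on $V$. Set $\wt{f}:=f\circ \pi$ and $\wt{g}:=(\Delta_{\wt{\omega}}\wt{f})_{-}$. Since $\pi$ is finite, $\wt{f},\wt{g}\in L^{p}(V,\wt{\omega}^{n})$ and $L^{\infty}$-norms on $V$ and $U_{\rm reg}$ are equivalent; moreover $\wt{f}$ satisfies $-\Delta_{\wt{\omega}}\wt{f}\le \wt{g}$ pointwise on $V\setminus E$, where $E:=\pi^{-1}(\Xs\cap U)$ has real codimension $\ge 4$ in $V$ (a surface quotient singularity being isolated). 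It therefore suffices to prove the interior estimate
\[
\|\wt{f}\|_{L^{\infty}(V')}\le C\,\big(\|\wt{f}\|_{L^{p}(V)}+\|\wt{g}\|_{L^{p}(V)}\big)
\]
on a concentric ball $V'\Subset V$, for a value of $p>1$ to be chosen.

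For $q>1$ and a spatial cutoff $\eta\in C^{\infty}_{c}(V)$ equal to $1$ on $V'$, I would test the pointwise subsolution inequality against $\wt{f}^{\,q-1}\eta^{2}\rho_{\delta}^{2}$, where $\rho_{\delta}$ is the Lemma~\ref{cutoff} cutoff of $\Xs\cap U$ pulled back via $\pi$. Since $\rho_{\delta}$ vanishes in a neighborhood of $E$, integration by parts on $V\setminus E$ is legitimate and the standard Cauchy--Schwarz/Young manipulation produces
\[
\int_{V}\big|\nabla\!\big(\wt{f}^{\,q/2}\eta\rho_{\delta}\big)\big|^{2}\,\wt{\omega}^{n}
\le C(q)\!\int_{V}\!\Big(\wt{g}\,\wt{f}^{\,q-1}+\wt{f}^{\,q}|\nabla\eta|^{2}+\wt{f}^{\,q}|\nabla\rho_{\delta}|^{2}\Big)\,\wt{\omega}^{n}.
\]
The decisive new contribution is the $|\nabla\rho_{\delta}|^{2}$ term. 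By Hölder with the conjugate pair $\big(\tfrac{2+\ep_{0}}{2},\tfrac{2+\ep_{0}}{\ep_{0}}\big)$ it is bounded by $\|\wt{f}^{\,q}\|_{L^{(2+\ep_{0})/\ep_{0}}(\supp\eta)}\cdot\|\,|\nabla\rho_{\delta}|\,\|_{L^{2+\ep_{0}}(V)}^{2}$, and Lemma~\ref{cutoff} forces the second factor to vanish as $\delta\to 0$. Provided $p$ has been chosen large enough that $\wt{f}$ has the required local integrability, this cross-term disappears in the limit.

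Passing to $\delta\to 0$ and invoking the Euclidean Sobolev inequality on $V$ (with exponent $2^{\ast}=\tfrac{2n}{n-1}$) then produces the standard recursion $\|\wt{f}\|_{L^{\chi q}(\supp\eta)}^{q}\le C(q)\big(\|\wt{f}\|_{L^{q}(V)}^{q}+\|\wt{g}\|_{L^{p}(V)}^{q}\big)$ with $\chi=\tfrac{n}{n-1}>1$, and iterating along $q_{k}=\chi^{k}q_{0}$ with a shrinking nested family of spatial cutoffs delivers the desired $L^{\infty}$ bound. The main technical obstacle is the admissibility check that fixes $p$: one must arrange simultaneously $p$ above the classical Moser threshold needed to handle the $\wt{g}\,\wt{f}^{\,q-1}$ term, and $p$ large enough in terms of $\ep_{0}$ for the $|\nabla\rho_{\delta}|^{2}$ cross-term to be negligible at every iteration step. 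These constraints are compatible and fix an effective $p>1$ depending on $n$ and $\ep_{0}$; once this is done, the scheme runs mechanically and reproduces the dimension-$3$ argument of \cite[Corollary 5.11]{GP25} in a dimension-agnostic way.
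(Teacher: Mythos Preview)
Your overall strategy---pull back to the smooth uniformising chart $V$ and use Lemma~\ref{cutoff} to handle the codimension-four set $E=\pi^{-1}(U_{\rm sing})$---is the right one, and is exactly what is meant by the reference to \cite[Corollary~5.11]{GP25}. However, the way you have organised the Moser iteration does not close. At step $k$ you test against $\wt f^{\,q_k-1}\eta^{2}\rho_{\delta}^{2}$, and to kill the cross-term $\int \wt f^{\,q_k}|\nabla\rho_{\delta}|^{2}$ via H\"older you need $\wt f\in L^{q_k(2+\ep_{0})/\ep_{0}}$. But the iteration has only produced $\wt f\in L^{q_k}$ at that stage, and the Sobolev gain factor $\chi=n/(n-1)$ is strictly smaller than $(2+\ep_{0})/\ep_{0}$ for the values of $\ep_{0}$ that Lemma~\ref{cutoff} actually delivers (one has $\ep_{0}<2$, since $E$ has real codimension four). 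Hence the integrability demand escalates faster than the scheme supplies, and no fixed $p$ can make the claim ``negligible at every iteration step'' true.

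The remedy is to decouple the two ingredients rather than interlace them. First use $\rho_{\delta}$ \emph{once}, with a linear test function: for $\varphi\in C^{\infty}_{c}(V)$, $\varphi\ge 0$, test the pointwise inequality on $V\setminus E$ against $\varphi\rho_{\delta}$ and pass to the limit. The error terms are $\int \wt f\,\nabla\varphi\cdot\nabla\rho_{\delta}$ and $\int \wt f\,\varphi\,\Delta_{\wt\omega}\rho_{\delta}$; by H\"older and Lemma~\ref{cutoff} they vanish as soon as $p\ge (1+\ep_{0})/\ep_{0}$. This shows that $-\Delta_{\wt\omega}\wt f\le \wt g$ holds in the weak sense on \emph{all} of $V$. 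Now the ball $V$ is smooth with a smooth K\"ahler metric, so the classical local maximum principle applies without any singular cutoff and gives $\wt f\in L^{\infty}$ provided $p>n$. Taking $p>\max\bigl(n,(1+\ep_{0})/\ep_{0}\bigr)$ therefore suffices, which is the effective threshold you were after.
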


\subsection{$C^0$ estimates for $h_{\cF}$ on $X\setminus Z$}
Let $h_\cF$ be a metric on $\cF$ such that the properties in (c) above are satisfied for some constant $C> 0$.
In what follows the notations are as above, and let $x\in X\setminus Z$ be a point at which $X$ has at worse quotient singularities. 
Consider $p:V\to U$ a uniformizing chart near $x$. We fix a smooth hermitian metric $h_{0, \cE}$ on $\cE$ and denote by $h_{\cE}$ the hermitian metric on $\cE|_{V_0}$ induced by $p^\star h_{\cF}|_{U_0}$. 
\medskip

\noindent The following statement shows that the two metrics we are considering here are quasi-isometric.
\begin{prop}
\label{h orbi}
With the above notation, there exists $C>0$ such that 
\[C^{-1} h_{0, \cE}\le h_{\cE} \le C\, h_{0, \cE} \quad \mbox{on }V_0.\]
\end{prop}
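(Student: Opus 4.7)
I would prove this by a Donaldson-type $C^{0}$ estimate combined with the determinant trick: the upper bound on $h_\cE$ is essentially a tautology given (c), and the lower bound comes from applying the Harnack-type Corollary~\ref{LI estimates theta} to the scalar $u := \log\det e^{s} = \Tr(s)$, after which an eigenvalue bookkeeping closes the argument.

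The first step is to fix a compatible set-up on the uniformizing chart. Writing $h_\cF = h_{\cF,0}\, e^{s}$ as in (c), the partition-of-unity construction of $h_{\cF,0}$ in (b) can be arranged so that, on any uniformizing chart $p: V\to U$, the pullback $p^{*} h_{\cF,0}$ extends to a smooth Hermitian metric on the locally free sheaf $\cE$ over $V$. Up to this identification I assume $h_{0,\cE} = p^{*} h_{\cF,0}$, so that $h_\cE = h_{0,\cE}\, e^{p^{*} s}$ on $V_{0}$. The upper bound is then immediate: $\Tr e^{s} \le C$ from (c) forces each eigenvalue of $e^{s}$ to lie in $(0,C]$, whence $h_\cE \le C\, h_{0,\cE}$ on $V_{0}$.

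For the lower bound, set $u := \log\det e^{s} = \Tr(s)$ on $\Xr\setminus Z$. Taking the trace of the Hermite--Einstein equation \eqref{HYM} and comparing with the curvature of $h_{\cF,0}$ yields an equation
\[
\Delta_\omega u = g
\]
with $g \in L^{\infty}$. The upper bound gives $u \le r\log C$, while the weak lower bound $e^{s} \ge e^{C\phi}\Id$ from (c) gives $u \ge rC\phi$; since $\phi$ is quasi-psh, it lies in $L^{p}_{\mathrm{loc}}$ for every $p < \infty$, and hence so does $u$. Fixing a neighborhood $U$ of $x$ as in Corollary~\ref{LI estimates theta} together with a cutoff $\chi \in \mathcal{C}^{\infty}_{c}(U)$ with $\chi \equiv 1$ on some $U' \Subset U$, I apply the corollary to the non-negative function $f := \chi \cdot (r\log C - u)$. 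The $L^{p}$-bound on $f$ follows from the $L^{p}$-bound on $u$; for $(\Delta_\omega f)_{-}$, the expansion
\[
\Delta_\omega f = -\chi g - 2\langle \nabla\chi, \nabla u\rangle + (r\log C - u)\Delta_\omega \chi
\]
bounds the first and third terms in $L^{p}$ directly, while the cross term can be handled either by integration by parts (converting it into contributions with $\Delta_\omega \chi$ and $\chi\Delta_\omega u$) or via the $L^{2}$-control on $\nabla s$, hence on $\nabla u = d\,\tr(s)$, that is implied by the $L^{2}$-curvature bound in (c).

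Corollary~\ref{LI estimates theta} then yields $f \in L^{\infty}$, i.e.\ $u \ge -C'$ on $U'$, so $\det e^{s} \ge e^{-C'}$. Combining this with the upper bound---each eigenvalue of $e^{s}$ is at most $C$---forces every eigenvalue to be at least $e^{-C'}\, C^{1-r}$, giving the matching lower bound $h_\cE \ge e^{-C'} C^{1-r}\, h_{0,\cE}$ on $p^{-1}(U')$; a covering argument on the compact closure of $V$ in the uniformizing ball then extends this to all of $V_{0}$. The main obstacle I anticipate lies in the $L^{p}$-control of the cross term $\langle\nabla\chi,\nabla u\rangle$: this is precisely the point where the $L^{2}$-curvature bound of (c), rather than just the pointwise Hermite--Einstein equation, is genuinely used, and the argument here is analogous to the integration-by-parts manipulations already developed in \cite{GP25}.
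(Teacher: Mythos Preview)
Your argument has two genuine gaps, both stemming from overly optimistic assumptions about the reference metric $h_{\cF,0}$.

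\textbf{The upper bound is not tautological.} You assume that ``the partition-of-unity construction of $h_{\cF,0}$ \ldots\ can be arranged so that $p^{*}h_{\cF,0}$ extends to a smooth Hermitian metric on $\cE$ over $V$'', and then identify $h_{0,\cE}=p^{*}h_{\cF,0}$. But this is precisely what one does \emph{not} know: the reference metric is built from local free resolutions of $\cF$ on $X$, and the paper's step~(1) shows that one only gets the weak comparison $e^{N\phi}h_{0,\cE}\le p^{*}h_{\cF,0}\le e^{-N\phi}h_{0,\cE}$, where $\phi$ has log poles along $X_{\mathrm{sing}}\cup Z$. Combined with $\Tr e^{s}\le C$ this gives merely $h_{\cE}\le Ce^{-N\phi}h_{0,\cE}$, which blows up along $V\setminus V_{0}$. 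So the upper bound is already a nontrivial part of the proposition, and your proof does not supply it.

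\textbf{The equation $\Delta_{\omega}u=g$ with $g\in L^{\infty}$ is not justified.} Writing $u=\Tr s$ and taking the trace of the HE equation yields $\Delta_{\omega}u=\Lambda_{\omega}\,\Tr\,\Theta(\cF,h_{\cF,0})-r\mu$. For $g\in L^{\infty}$ you would need the mean curvature of the \emph{reference} metric $h_{\cF,0}$ to be bounded with respect to $\omega$. This is exactly what the Remark following~(c) flags as open (``it is unclear if the first $L^{\infty}$ property holds in this case''). On a resolution $\pi:\widehat X\to X$ the form $\Tr\,\Theta(\cF,h_{\cF,0})$ is smooth, but $\pi^{*}\omega^{n}$ degenerates along the exceptional set, so the contraction $\Lambda_{\omega}$ typically blows up; there is no reason for $g$ to lie in $L^{\infty}$, nor even in $L^{p}$ for some $p>1$ as Corollary~\ref{LI estimates theta} would require.

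The paper's route avoids both pitfalls by never invoking the curvature of $h_{\cF,0}$. One works with $\psi=|\tau|^{2}_{h_{\cE}}$ for a holomorphic section $\tau$ of $\cE^{\star}$ (and symmetrically of $\cE$): the Bochner identity gives $\Delta_{\omega_{V}}\psi\ge -C\psi$ using only the bound $|\Lambda_{\omega}\Theta(\cF,h_{\cF})|\le C$ from~(c), while the $L^{p}$ hypothesis of Corollary~\ref{LI estimates theta} is met by the weak step-(1) inequality $\psi\le Ce^{-N\phi}$ after passing to $\psi^{\ep}$ with $0<\ep\ll 1$. The small power is exactly the device that converts a polynomially-blowing-up a~priori bound into the needed $L^{p}$ integrability; your determinant approach has no analogous mechanism, because the Laplacian of $u$ is governed by the (uncontrolled) reference curvature rather than by~$u$ itself.
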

\medskip

\noindent The complete proof of this result is given in \cite{GP25}, cf. Theorem 5.15. 
We nevertheless review here next the main steps.
\begin{enumerate}

\item One easily obtains a weaker inequality, namely 
\[e^{N\phi} h_{0, \cE}\le h_{\cE} \le e^{-N\phi} h_{0, \cE} \quad \mbox{on }V_0.\]
holds for some large enough $N\gg 0$. This is first proved for the pull-back of the reference metric, and then the corresponding 
inequality for $h_{\cF}$ follows from (c).

\item Next, we can assume that $\cE$ is trivial, and let $\tau$ be any holomorphic section of $\cE^{\star}$. The main part of the proof consists in obtaining a differential inequality for the Laplacian (with respect to the pull-back of $\omega$) of $\chi \psi^\ep$, where $\chi$ is a cutoff function, $\psi:= |\tau|^2$ is the norm of our section and $0<\ep\ll 1$ is a small enough positive real.
 
\item Corollary \ref{LI estimates theta} is now used --remark that by choosing the $\ep$ above small enough, the integrability condition will be satisfied--, and this is the end of the proof. 
\end{enumerate}

\subsection{End of the proof: integration by parts on $X\setminus Z$} We will adopt the same strategy as in the previous subsection, by highlighting the main steps of the proof of the analogous statement Theorem 5.17 in \cite{GP25}, and adding the necessary explanations for the higher dimensional case.
\begin{enumerate}
\smallskip

\item[(i)] It is clear that the result we are after is local, so that we can assume that $\alpha$ has support in a set $U$ as in \eqref{eq-1} and Section \ref{Noset}. Moreover, we can assume that on the domain $V$ of the uniformising chart 
\[p: V\to U\]
we can construct a
trivialising frame $(e_1, \ldots, e_r)$ of $\cE\to V$.
Denote by $A$ the connection $1$-form on $V_0$ associated to $h_\cE$, with respect to the chosen basis. The curvature 
$\Theta(\cE, h_\cE)= \dbar A$ is the antiholomorphic derivative of $A$ over $V_0$. 

\smallskip
\noindent
Next, let $(\chi_\ep)$ be any family of cut-off functions for $U\setminus U_0=U\cap \Xs\cap Z$. The statement to be proved is equivalent to showing that the next equality is true
\[
\lim_{\ep\to 0}\int_{U}\Tr\big(\Theta(\cF, h_\cF)\wedge \Theta(\cF, h_\cF)\big)\wedge \dbar\chi_\ep\wedge \alpha= 0.
\]
By pulling back to $V$ via the map $p$ and using integration by parts, this is the same as showing that the equality
\[
\lim_{\ep\to 0}\int_{V}\Tr\big(A\wedge \Theta(\cE, h_\cE)\big)\wedge p^\star(\dbar\chi_\ep\wedge \dbar \alpha)=0
\]
holds. Now, we denote by $\omega_V:= p^\star\omega$ the pull-back of our metric via $p$ and set 
\[\psi_\ep:=|p^\star(\dbar\chi_\ep)|^2_{\omega_V}.\]
Cauchy-Schwarz inequality combined with the fact that 
the following holds
\begin{equation}\label{nfolds7}
|A|^2_{h_\cE, \omega_V}\leq C\sum_i |D'e_i|^2_{h_\cE, \omega_V}\end{equation}
(this is a consequence of Proposition \ref{h orbi})
show that it would be sufficient to prove that the relation
\begin{equation}\label{3folds7}
\limsup_{\ep\to 0}\int_{V}(\psi_\ep + |\partial \psi_\ep|+|\Delta''_{\omega_V} \psi_\ep|) \, \omega_V^n<\infty.
\end{equation}
Here we use a differential inequality for the Laplacian of $|e_i|^2_{h_\cE}$ (needed in order to bound the RHS of the inequality \eqref{nfolds7} above) and integration by parts.  
\smallskip

\item[(ii)] Recall that $(U,x)\simeq (\mathbb C^{n-2},0) \times (S,s)$ where $i:(S,s)\hookrightarrow (\mathbb C^N,0)$ is a germ of surface with log terminal singularities. Let $q:(\mathbb C^2, 0) \to (S,s)$ be a uniformizing chart and let $f:=i\circ q:\mathbb C^2\to \mathbb C^N$ be the composition. Therefore, up to shrinking $U$, the ramified cover $p:V\to U$ can be written as follows 
\[
(Z, u, v)\to \big(Z, f_1(u, v), \ldots,f_N(u, v)\big)
\]
where $Z= (z_1,\dots, z_{n-2})$. 
\smallskip 

\noindent This "explicit" expression is used first to define a function
\[\rho(z, u, v):= \theta(|u|^2+ |v|^2)+ \sum_{i=1}^N|f_i(u, v)|^2\]
and then the following truncation function
\[
\chi_\ep:= \Xi_\ep\big(\log\log(1/\rho)\big),
\]
which appears at the previous point. Here $\Xi_\ep: \mathbb R_+\to \mathbb R_+$ is equal to 1 on the interval $[0, \ep^{-1}[$ and with zero on $[1+\ep^{-1}, \infty[$. We will also assume that the absolute value of the first and second derivatives of $\Xi_\ep$ are uniformly bounded. 
\smallskip

\noindent The reason for the choice of $\rho$ as above is that we obviously have
\begin{align}\nonumber
\omega_V \simeq & \, dd^c\rho+ \omega_{\rm euc} \nonumber \\
\simeq & \,\theta dd^c(|u|^2+ |v|^2)+ 
dd^c\big(\sum_{i=1}^N|f_i(u, v)|^2\big)+ \omega_{\rm euc}
\nonumber 
\end{align}
where $\omega_{\rm euc}:= \sqrt{-1}\sum_{i= 1}^{n-2}dz_i\wedge d\ol z_i$. In particular, we see that the equations $(u=v=0)$ and $(f_1=\ldots=f_N=0)$ defining the bad set $p^{-1}(U_{\rm sing})$ also appear naturally in the potential of $\omega_V$; this plays a crucial role in $(iii)$ below through the use of \cite[Lemma~5.19]{GP25}. Let us finally observe that we have
\[\omega_V^n\simeq \Big(\theta dd^c(|u|^2+ |v|^2)+ 
dd^c\big(\sum_{i=1}^N|f_i(u, v)|^2\big)\Big)^2\wedge \omega_{\rm euc}^{n-2}=: dV_\theta.\]
\smallskip

\item[(iii)] Lemma~5.18 in \cite{GP25} establishes that \eqref{3folds7} holds as soon as we have 
\begin{equation}\label{nfolds}
\lim_{\ep\to 0}\int_{\supp \Xi_\ep'}\frac{dV_\theta}{\rho^2 \log^2{(|u|^2+|v|^2)}}< \infty.
\end{equation}
Finally,  \eqref{nfolds} is fully established in \cite{GP25} as a consequence of Lemma~5.19 therein and the discussion in Step 2 of the proof of Theorem 5.17. 
\end{enumerate}

\section{Closedness of $\Theta$ on $X$}
\label{sec closed}
In this section, we generalize Theorem~\ref{dbar closed} by showing that the second Chern form associated to $(\cF, h_{\cF})$ has stronger closedness properties: they hold globally on $X$ rather than in the complement of $Z$ only. 
\begin{thm}
\label{closed}
Let $\Theta$ be as Theorem~\ref{dbar closed} and let $\alpha\in \cE^{2n-5}(X)$. Then 
\[\int_{\Xr\setminus Z} \Theta \wedge d \alpha=0.\]
In particular, we have $d\Theta=0$ on $X$. 
\end{thm}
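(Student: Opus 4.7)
The plan is to deduce Theorem~\ref{closed} from Theorem~\ref{dbar closed} via a cutoff procedure around $Z$. The only difference between the two statements lies in the allowed test forms: $\cE^{2n-5}_0(X)$ requires support away from $Z$, whereas $\cE^{2n-5}(X)$ allows forms that extend smoothly across $Z$. Bridging this gap amounts to producing good cutoffs of $Z$ with sufficient $L^2$ control.

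Since $Z\subset X$ has complex codimension at least three, hence real codimension at least six in $X$, and since $\omega=\omega_X$ in a neighborhood of $Z$ by construction ($\theta\equiv 0$ there), one can produce a family of smooth cutoff functions $(\chi_\delta)_{\delta>0}$ with $0\le \chi_\delta\le 1$, $\chi_\delta\equiv 0$ in a neighborhood of $Z$, $\chi_\delta\to 1$ pointwise on $X\setminus Z$, and
\[\int_X |d\chi_\delta|^2_\omega\, \omega^n \xrightarrow[\delta\to 0]{} 0.\]
Such cutoffs arise from a standard local capacity argument: in a local embedding $X\hookrightarrow \C^N$, $Z$ is cut out by at least three holomorphic equations, and a $\log\log$-truncation of their squared norm --- analogous to the cutoffs of \cite[Lemma~5.9]{GP25} --- provides the desired family, the codimension three bound forcing the $L^2$ integrability of the gradient.

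For each fixed $\delta>0$, the form $\chi_\delta\alpha$ lies in $\cE^{2n-5}_0(X)$, so Theorem~\ref{dbar closed} yields $\int_{\Xr\setminus Z}\Theta\wedge d(\chi_\delta\alpha)=0$. Applying the Leibniz rule and rearranging,
\[\int_{\Xr\setminus Z}\Theta\wedge d\alpha \;=\; \int_{\Xr\setminus Z}\Theta\wedge (1-\chi_\delta)\, d\alpha \;-\;\int_{\Xr\setminus Z}\Theta\wedge d\chi_\delta\wedge \alpha,\]
so it suffices to show that both right-hand side terms vanish as $\delta\to 0$. The first is handled by dominated convergence: $|d\alpha|_\omega$ is bounded by definition of $\cE^{2n-5}(X)$, the positive measure $|\Theta|_\omega\,\omega^n$ is finite on $X$ (Cauchy--Schwarz applied to \eqref{L2} and $\int_X\omega^n<\infty$), and $(1-\chi_\delta)$ is supported in a neighborhood of $Z$ that shrinks to the measure-zero set $Z$. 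The second is controlled by Cauchy--Schwarz together with \eqref{L2}:
\[\Big|\int_{\Xr\setminus Z}\Theta\wedge d\chi_\delta\wedge\alpha\Big| \le C\,\|\alpha\|_{L^\infty(\omega)}\Big(\int_X |\Theta|^2_\omega\,\omega^n\Big)^{1/2}\Big(\int_X |d\chi_\delta|^2_\omega\,\omega^n\Big)^{1/2},\]
which tends to zero by \eqref{L2} and the choice of $\chi_\delta$. The assertion $d\Theta=0$ on $X$ then follows from the fact that every smooth $(2n-5)$-form on $X$, obtained by restricting a smooth form along any local embedding, lies in $\cE^{2n-5}(X)$.

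The main obstacle is the construction of the cutoffs $(\chi_\delta)$: the global $L^2(\omega)$ norm of their gradients must decay to zero, and this is precisely where the codimension three hypothesis on $Z$ is used. Once the cutoffs are in place, the remainder is a direct Cauchy--Schwarz bookkeeping that only uses the $L^2$-bound \eqref{L2} already established for $h_\cF$.
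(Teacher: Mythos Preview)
There is a genuine gap in your Cauchy--Schwarz step. You bound
\[
\Big|\int_{\Xr\setminus Z}\Theta\wedge d\chi_\delta\wedge\alpha\Big| \le C\,\|\alpha\|_{L^\infty(\omega)}\Big(\int_X |\Theta|^2_\omega\,\omega^n\Big)^{1/2}\Big(\int_X |d\chi_\delta|^2_\omega\,\omega^n\Big)^{1/2}
\]
and invoke \eqref{L2} for the first factor. But \eqref{L2} only gives $\Theta(\cF,h_\cF)\in L^2$, whereas the form $\Theta$ appearing here is $\Tr\big(\Theta(\cF,h_\cF)\wedge\Theta(\cF,h_\cF)\big)$, which is \emph{quadratic} in the curvature. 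Thus $|\Theta|_\omega\le C\,|\Theta(\cF,h_\cF)|^2_\omega$ and one only knows $\Theta\in L^1$, not $\Theta\in L^2$. No choice of cutoff with $\|d\chi_\delta\|_{L^p}\to 0$ (which requires $p$ strictly less than the real codimension of $Z$) can be paired via H\"older with a form that is merely $L^1$, since the dual exponent would be $L^\infty$ and $\|d\chi_\delta\|_{L^\infty}$ blows up. So the argument breaks precisely at the term $\int\Theta\wedge d\chi_\delta\wedge\alpha$.

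This is not a cosmetic issue: the paper's proof is designed around the fact that only $\Theta\in L^1$ is available. It stratifies $Z$ by smooth strata $Z_k\setminus Z_{k-1}$ and runs a downward induction. The key observation is algebraic: since $\dim Z_k\le k\le n-3$, any smooth $(2n-5)$-form near $Z_k$ necessarily contains a differential normal to $Z_k$, and can therefore be written as $S_0+dT_0$ with both $S_0$ and $T_0$ having coefficients in the ideal of $Z_k$. This extra order of vanishing exactly cancels the blow-up of $d\rho_\ep$, so that $d\rho_\ep\wedge S_0$ and $d\rho_\ep\wedge T_0$ have \emph{uniformly bounded} coefficients on a set of shrinking volume; pairing with $\Theta\in L^1$ then suffices. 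The exact term $dT_0$ is handled by a further integration by parts using the inductive hypothesis. Your approach would work if one had $\Theta(\cF,h_\cF)\in L^4$ (or equivalently $\Theta\in L^2$), but that is not known in this setting.
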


\begin{proof}
Let us stratify $Z$ as 
\[\emptyset = Z_{-1}\subset Z_0 \subset Z_1 \subset \ldots \subset Z_{n-4}\subset Z_{n-3}=Z\]
in such that way that for each index $0\le k \le n-3$, the set $Z_k$ is a closed analytic subset of $Z$ of dimension $\dim Z_k\le k$ and $Z_k\setminus Z_{k-1}$ is smooth.  We introduce the property $(P_k)$ as follows
\begin{equation}
\label{Pk}
\tag{$P_k$}
\forall S\in\cE^{2n-5}(X), \forall f \in C^{\infty}_0(X\setminus Z_k), \quad \int_{\Xr} f\Theta \wedge d S=-\int_{\Xr} d f \wedge \Theta \wedge S.
\end{equation}
The theorem will be proved if one can show that $(P_{-1})$ holds. Moreover, we remark that $(P_{n-3})$ holds thanks to Theorem~\eqref{dbar closed}. We are going to show by induction on $k$ that $(P_k)$ holds for any $k$. Assume that $(P_k)$ holds for some $k\le n-3$ and let us show that $(P_{k-1})$ holds as well.  We fix a form $S\in \cE^{2n-5}(X)$ and a  function $f \in C^{\infty}_0(X\setminus Z_{k-1})$ and we want to show that 
\begin{equation}
\label{goal}
\int_{\Xr} f\Theta \wedge d S=-\int_{\Xr} d f \wedge \Theta \wedge S.
\end{equation}
We proceed in two steps. \\

$\bullet$ First, since $Z_k\setminus Z_{k-1}$ is smooth, we can cover $Z_k \cap \mathrm{Supp}(f)$ by finitely many open sets $U_\alpha \hookrightarrow \mathbb C^N$ such that $Z_k\cap U_\alpha$ is a linear subspace when viewed inside $\mathbb C^N$. Let $\tau_\alpha$ be a partition of unity subordinate to the covering $(U_\alpha)$; by writing $S=\sum \tau_\alpha S$, we see that it is enough to show \eqref{goal} when $S$ has support in $U_\alpha$, which we will assume from now on. In particular, up to shrinking $U_\alpha$ further, there is no loss of generality assuming that $S$ is smooth form, i.e. $S\in \mathcal C^{\infty}_c(U_\alpha, \mathcal A^{2n-5}_X)$. 

As explained a few lines above, we can assume that the $k$-dimensional set $Z_k$ is given by the equations $z^i= 0$, for $i= k+1,\dots, N$. We have
\[S= \sum a_{I\ol J}(z)dz^I\wedge dz^{\ol J}\]
where $a_{I\ol J}$ are smooth, and $|I|+|J|=2n-5$. In particular, we have $\max\{|I|, |J|\}\ge n-2$. Since $k\leq n-3$, any subset of $\{1,\dots, N\}$ of cardinality at least $n-2$ contains at least one element belonging to $\{k+1,\dots, N\}$. Thus, $S$ is a sum of forms 
\[a_{I\ol J}(z)dz^{\ell}\wedge dz^{I'}\wedge dz^{\ol J} \quad \mbox{or} \quad a_{I\ol J}(z)dz^{\ol \ell}\wedge dz^{I}\wedge dz^{\ol J'}\]
for some $\ell \ge k+1$; i.e. such a form contains at least a differential normal to $Z$. Furthermore, we can write
\[a_{I\ol J}(z)dz^{\ell}\wedge dz^{I'}\wedge dz^{\ol J}= d\big(a_{I\ol J}(z) z^{\ell}dz^{I'}\wedge dz^{\ol J}\big)- z^{\ell}da_{I\ol J}(z)\wedge dz^{I'}\wedge dz^{\ol J}.\]
In conclusion, one can write 
\begin{equation}
\label{additional van}
S=S_0+dT_0
\end{equation}
where $T_0$ has degree $2n-6$ and the coefficients of both $S_0$ and $T_0$ are smooth (hence $S_0, T_0$ are in $\cE^{\bullet}(U_\alpha)$) belong to the ideal generated by the equations of $(Z, 0)$. 

\bigskip

 $\bullet$ Next, let $(\rho_{\ep})$ be a family of cutoff functions with support in $X\setminus Z_k$ such that $\rho_\ep \to 1_{X\setminus {Z_{k}}}$ as $\ep \to 0$. Since $f\rho_\ep \in C^{\infty}_0(X\setminus Z_k)$ and $(P_k)$ holds, we have 
\begin{eqnarray*}
\int_{\Xr} f\Theta \wedge d S&=&\lim_{\ep \to 0} \int_{\Xr} f\rho_\ep\Theta \wedge d S\\
&=&-\lim_{\ep \to 0} \Big(\int_{\Xr} \rho_\ep d f \wedge \Theta \wedge  S+\int_{\Xr}  fd \rho_\ep \wedge \Theta \wedge  S\Big)
\end{eqnarray*}
Since $\lim_{\ep \to 0} \int_{\Xr} \rho_\ep d f \wedge \Theta \wedge  S= \int_{\Xr} d f \wedge \Theta \wedge  S$, showing the desired identity \eqref{goal} is equivalent to showing  
\begin{equation}
\label{limit}
\lim_{\ep \to 0} \int_{\Xr}  f d \rho_\ep \wedge \Theta \wedge  S=0.
\end{equation}
Now we use the decomposition $S=S_0+dT_0$ from \eqref{additional van} where the coefficients of $S_0$ and $T_0$ vanish along $Z_k$, hence the coefficients of
\[d \rho_\ep \wedge S_0 \quad \mbox{and} \quad  d \rho_\ep \wedge T_0\]
 are uniformly bounded as $\ep \to 0$. Since $\Theta$ has $L^1$ coefficients and the volume of $\mathrm{Supp}(d \rho_\ep)$ (say with respect to $\omega$) shrinks to zero, it follows that 
 \[ \lim_{\ep \to 0} \int_{\Xr}  f d \rho_\ep \wedge \Theta \wedge  S_0=0.\]
Therefore, proving \eqref{limit} comes down to proving 
\begin{equation}
\label{limit2}
\lim_{\ep \to 0} \int_{\Xr}  f d \rho_\ep \wedge \Theta \wedge  dT_0=0.
\end{equation}
For that, we write
\begin{equation}
\label{dfg}
f d \rho_\ep \wedge  dT_0=d f \wedge d\rho_\ep \wedge T_0 -d\big(fd\rho_\ep \wedge T_0\big)
\end{equation}
Now, for $0<\delta \ll 1$, we have 
\begin{eqnarray*}
\int_{\Xr} \Theta\wedge d\big(fd\rho_\ep \wedge T_0\big) &= &\int_{\Xr} \rho_\delta\Theta\wedge d\big(fd\rho_\ep \wedge T_0\big)\\
&=&-\int_{\Xr}   \Theta \wedge (d\rho_\delta \wedge  fd\rho_\ep \wedge T_0)
\end{eqnarray*}
where we used $(P_k)$ in the last identity. Since $d \rho_\delta \wedge T_0$ has uniformly bounded coefficients, the latter integral goes to zero as $\delta\to 0$, hence it vanishes identically. Using \eqref{dfg}, one gets
\[ \int_{\Xr}  f d \rho_\ep \wedge \Theta \wedge  dT_0=\int_{\Xr} \Theta\wedge d f \wedge d\rho_\ep \wedge T_0.\] 
and the integral in the RHS above converges to zero as $\ep\to 0$ by the same token as for the previous integral. This shows \eqref{limit2} and finishes the proof of the theorem. 
\end{proof}


\section{Orbifold Chern classes revisited} 
\label{OCC}

In this section, we are aiming at a new characterization for the second orbifold Chern class associated to a reflexive sheaf on a compact normal variety which has only quotient singularities in codimension two. Our main motivation is to have a notion which is aligned with the methods and tools from geometric analysis. This will eventually prove to be crucial in order to obtain the 
Bogomolov-Gieseker inequality in this context.  
\smallskip

We start by discussing several cohomology theories (singular, de Rham, Dolbeault...) and the relations between them.

 \label{cohomology}
\subsection{The Leray map}
Let $X$ be a normal connected complex space of dimension $n$, not necessarily compact. Recall that a smooth differential form $\alpha$ on $X$ of degree $k$ (resp. bi-degree $(p,q)$) is by definition a smooth $k$-form (resp. $(p,q)$-form) on $\Xr$ which extends smoothly to local embedding in euclidean space. We say that $\alpha$ is $d$-closed (resp. $\dbar$-closed) if $d\alpha=0$ on $\Xr$ (resp. $\dbar \alpha=0$ on $\Xr$). We denote by $\cA^k$ (resp.  $\cA_X^{p,q}$) the $\cC^{\infty}_X$-module of smooth $k$-forms (resp. $(p,q)$-forms), and by $\Omega_X^p$ the coherent sheaf of Kähler differentials of degree $p$. In general, the complexes
 \begin{equation}
 \label{complex1}
  0\longrightarrow \mathbb R_X \longrightarrow \cA_X^{0} \overset{d}{\longrightarrow}\cA_X^{1} \overset{d}{\longrightarrow}\cA_X^{2}  \overset{d}{\longrightarrow} \cdots  \overset{d}{\longrightarrow} \cA_X^{2n} \overset{d}{\longrightarrow} 0
 \end{equation}
 
 \begin{equation}
 \label{complex2}
 0\longrightarrow \Omega_X^p\longrightarrow \cA_X^{p,0} \overset{\dbar}{\longrightarrow}\cA_X^{p,1} \overset{\dbar}{\longrightarrow}\cA_X^{p,2}  \overset{\dbar}{\longrightarrow} \cdots  \overset{\dbar}{\longrightarrow} \cA_X^{p,n} \overset{\dbar}{\longrightarrow} 0
 \end{equation}
 need not be exact, so that the singular cohomology space $H^k(X,\mathbb R)$  may not coincide with the de Rham cohomology space  
 \[H_{d}^{k}(X):=\frac{\{u\in \cC^\infty(X,\cA_X^{k}); \,\,d u=0\}}{\{d v; \,\,v\in \cC^\infty(X,\cA_X^{k-1})\}}.\] Similarly, the coherent cohomology space $H^q(X, \Omega_X^p)$) need not coincide with the  Dolbeault cohomology space \[H_{\dbar}^{p,q}(X):=\frac{\{u\in \cC^\infty(X,\cA_X^{p,q}); \,\,\dbar u=0\}}{\{\dbar v; \,\,v\in \cC^\infty(X,\cA_X^{p,q-1})\}}.\] 

 \medskip
 However, if we work with locally $\ddbar$-exact forms, we can go around this difficulty as we explain now. Let $\omega$ be a smooth real $(1,1)$-form which is locally $\ddbar$-exact. For each $p\geq 1$ we recall next the construction of the classes 
$\{\omega\}^p$ and  $[\omega]^p$ in $H^{2p}(X,\mathbb R)$ and $H^p(X, \Omega_X^p)$, respectively. 
\smallskip
 
By our assumption there exists a covering $X=\bigcup U_i$ by Stein open sets such that $\omega|_{U_i}=\sqrt{-1}\, \dbar \d \phi_i$ for some real-valued functions $\phi_i \in \cC^{\infty}(U_i)$.  In other words, the datum of $\omega$ is equivalent to the datum of an element in $H^0(X, \cC^\infty_X/\PH_X)$ where $\PH_X$ is the sheaf of pluriharmonic functions. From the exact sequence 
 \[0\longrightarrow  \PH_X \longrightarrow\cC^\infty\longrightarrow \cC^\infty/\PH_X\longrightarrow 0 \]
 we can associate to $\omega$ an element 
 \begin{equation}
 \label{kappa}
 \kappa(\omega)\in H^1(X, \PH_X).
 \end{equation} Next, we have the exact sequence
  \[0\longrightarrow  \mathbb R_X \longrightarrow\cO_X \overset{2\mathrm{Im}}{\longrightarrow}\PH_X\longrightarrow 0 \]
 yielding a connecting morphism $\delta^1:H^1(X, \PH_X)\to H^2(X,\mathbb R)$. We define
 \begin{equation}
 \label{de Rham class}
 \{\omega\}=:\delta^1(\kappa(\omega)) \in H^2(X,\mathbb R).
 \end{equation}
 Next, the differential map
 \[d:\cO_X\to \Omega_X^1\]
  factors through $\cO_X/\mathbb R_X\simeq \PH_X$, hence induces a map $d:\PH_X\to \Omega_X^1$. That map also induces a map in cohomology $d^1:H^1(X, \PH_X)\to H^1(X,\Omega_X^1)$, hence we get an element 
   \begin{equation}
 \label{coherent class}
 [\omega]=d^1(\kappa(\omega))\in H^1(X,\Omega_X^1).
 \end{equation} 
 In \v{C}ech cohomology, we have
 \[\kappa(\omega)=(\phi_{ij}), \quad \{\omega\}=(f_{ijk}), \quad \mbox{and} \quad [\omega]=(df_{ij}),\] 
 if we write $\phi_{ij}:=\phi_i-\phi_j$, $2\mathrm{Im}(f_{ij})=\phi_{ij}$ and $f_{ijk}:=f_{ij}+f_{jk}+f_{ki}$. For any integer $1\le p \le n$, we also get  cohomology classes 
 \[\{\omega\}^p:=\{\omega\} \cup \ldots \cup \{\omega\} \in H^{2p}(X,\mathbb R), \quad \mbox{and} \quad [\omega]^p:=[\omega] \cup \ldots \cup [\omega] \in H^p(X, \Omega_X^p)\]
 which are represented respectively by the $2p$-cocycle $\prod_{k=0}^{p-1}f_{i_{2k}i_{2k+1} i_{2k+2}}$ and the $p$-cocycle $df_{i_0i_1}\wedge df_{i_1i_2}\wedge \cdots \wedge df_{i_{p-1}i_p}$. 
 
 \medskip
  \subsubsection{From coherent to Dolbeault cohomology}

 Despite the failure of exactness of the complex \eqref{complex2}, we can still associate to any element $\beta\in H^q(X,\Omega_X^p)$ a $\dbar$-closed smooth $(p,q)$-form, via the usual Leray map. The notations in our next statement are self-explanatory.
 
 \begin{prop}
 \label{coh to dol}
Let $0\le p,q \le n$ be two positive integers. There exists a morphism
  \[\Phi=\Phi_{p,q}:H^q(X,\Omega_X^p)\longrightarrow H^{p,q}_{\dbar}(X)\]
  such that if $\omega$ is a locally $\ddbar$-exact smooth $(1,1)$-form on $X$, the following equality
\[\Phi_{p,p}([\omega]^p)= (-1)^{\frac{p(p-1)}{2}}[\omega^p]_{\dbar}\]
holds.
 \end{prop}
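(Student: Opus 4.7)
The construction of $\Phi$ proceeds via the \v{C}ech-to-Dolbeault comparison. Pick a locally finite Stein open cover $\mathcal{U}=(U_i)_{i\in I}$ of $X$; by Cartan's Theorem B, the \v{C}ech complex $C^\bullet(\mathcal U,\Omega_X^p)$ computes $H^q(X,\Omega_X^p)$. Form the double complex $K^{a,b}:=C^a(\mathcal U,\cA_X^{p,b})$ with differentials the \v{C}ech coboundary $\delta$ and the Dolbeault operator $\dbar$. Since each $\cA_X^{p,b}$ is fine, $H^a(\mathcal U,\cA_X^{p,b})=0$ for $a\ge 1$; one of the two spectral sequences of $K^{\bullet,\bullet}$ then degenerates at $E_2$ and identifies $H^q(\mathrm{Tot}\,K)$ with $H^{p,q}_{\dbar}(X)$. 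The inclusion of complexes $C^\bullet(\mathcal U,\Omega_X^p)\hookrightarrow K^{\bullet,0}\subset \mathrm{Tot}\,K$ (well defined since $\dbar$ annihilates holomorphic forms) furnishes the desired morphism
\[\Phi_{p,q}\colon H^q(X,\Omega_X^p)\longrightarrow H^{p,q}_{\dbar}(X).\]
Concretely, $\Phi$ is computed by a standard zigzag: starting from a \v{C}ech $q$-cocycle valued in $\Omega_X^p$, use a partition of unity to invert $\delta$, apply $\dbar$, invert $\delta$ again, and iterate until a global $\dbar$-closed section of $\cA_X^{p,q}$ is reached, whose Dolbeault class is the image.

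For the second assertion, the key is to choose the local potentials of $\omega$ consistently. Writing $\omega|_{U_i}=\sqrt{-1}\,\dbar\d\phi_i$, set $\eta_i:=\sqrt{-1}\,\d\phi_i$. Then $\dbar\eta_i=\omega$ on $U_i$ and, on each $U_i\cap U_j$, one has $\eta_i-\eta_j=\d f_{ij}$ (indeed $\sqrt{-1}\,\d\phi_{ij}=\d f_{ij}$, since $\d\overline{f_{ij}}=0$). Thus the \v{C}ech cocycle $(df_{i_0i_1}\wedge\cdots\wedge df_{i_{p-1}i_p})$ representing $[\omega]^p$ equals the telescoping product $\prod_{k=1}^p(\eta_{i_{k-1}}-\eta_{i_k})$. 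Expanding this product identifies it, up to a $p$-dependent sign, with the \v{C}ech coboundary of $\eta_{i_0}\wedge\cdots\wedge\eta_{i_{p-1}}$; this provides the first lift in the zigzag. Applying $\dbar$ replaces each $\eta_{i_j}$ by $\omega$; since $\omega$ has even degree it factors out, and the result rewrites (up to sign) as the \v{C}ech coboundary of $\omega\wedge\eta_{i_0}\wedge\cdots\wedge\eta_{i_{p-2}}$. Iterating $p$ times trades the $p$ copies of $\eta$ for $p$ copies of $\omega$, and the final global $(p,p)$-form obtained is $(-1)^{p(p-1)/2}\omega^p$, which is the desired identity in $H^{p,p}_{\dbar}(X)$.

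The main technical difficulty is the sign bookkeeping through $p$ iterations of the zigzag. The factor $(-1)^{p(p-1)/2}$ is characteristic of wedge-product antisymmetrisation: it measures the discrepancy between the shuffle-type sign convention for the \v{C}ech cup product on $\bigoplus_{p,q}H^q(X,\Omega_X^p)$ and the Koszul rule for the wedge product on $\bigoplus_{p,q}H^{p,q}_{\dbar}(X)$. Independence of $\Phi$ from the Stein cover, its refinements and the lifts chosen in the zigzag is routine double-complex bookkeeping: distinct lifts differ by \v{C}ech coboundaries, which, once the iteration is completed, contribute only $\dbar$-exact terms to the final global form.
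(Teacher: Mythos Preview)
Your proof is correct and follows essentially the same route as the paper: both construct $\Phi$ via the standard \v{C}ech--Dolbeault zigzag and then compute $\Phi([\omega]^p)$ by iteratively trading each factor $df_{ij}$ for a copy of $\omega$, accumulating the sign $(-1)^{p(p-1)/2}$. The only cosmetic difference is that you work directly with the explicit local primitives $\eta_i=\sqrt{-1}\,\partial\phi_i$ (satisfying $\dbar\eta_i=\omega$ and $\eta_i-\eta_j=df_{ij}$), whereas the paper uses the partition-of-unity lifts $\sum_\ell\rho_\ell\, df_{i\ell}=\eta_i-u$ and carries the global $(1,0)$-form $u$ through the iteration; the two computations are equivalent.
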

 
 \begin{proof}
 Pick $\beta \in H^q(X,\Omega_X^p)$ and think of $\beta$ as given by a $q$-cocycle of holomorphic $p$-forms $\beta^0=(\beta^0_{i_0\cdots i_q})$ on $U_{i_0\cdots i_q}$. We can construct iteratively a family 
 $(\beta^k)_{k=0, \ldots,q}$ such that 
 \begin{equation}
 \label{beta k}
 \beta^k\in C^{q-k}(X,\cA^{p,k}), \quad \delta \beta^k=0, \quad \dbar \beta^k=0,
 \end{equation}
 or, in other words, $\beta^k$ is a $(q-k)$-cocycle $\beta^k$ with values in closed $\dbar$-closed $(p,k)$-forms. We already have $\beta^0$ constructed; if $\beta^k$ is constructed for some $0\le k \le q-1$, we first pick 
 \[\widetilde \beta^k \in C^{q-k-1}(X, \cA^{p,k}) \quad \mbox{s.t.} \quad  \delta(\widetilde \beta^k)=\beta^{k}.\]
 For instance, if $(\rho_{\ell})$ is a partition of unity subordinate to our covering, we set $\widetilde \beta^k_{i_0\cdots i_{q-k-1}}:=\sum_{\ell} \rho_\ell \beta^1_{i_0\cdots i_{q-k-1}\ell}$. Then one defines
 \[\beta^{k+1}:=\dbar \widetilde\beta^k, \]
 which satisfies \eqref{beta k}. At the end of the process, i.e. when $k=q$, we obtain an element $\beta^q$ which is simply a $\dbar$-closed $(p,q)$-form. 
 
 \medskip
 
 Now we need to show that if we choose a different representative $\beta^0$ of $\beta$ or different primitives $\widetilde \beta^k$ of $\beta^{k}$, the resulting $\beta^q$ will differ only by the $\dbar$ of a smooth $(p,q-1)$-form.  Let us start with the first one. If we choose a different representative of $\beta^0$, this amount to replacing $\widetilde \beta^0$ by $\widetilde \beta^0+\widetilde \tau^0$ with $\tau^0$ holomorphic. In particular $\beta^1=\dbar \widetilde \beta^0$ does not change. 
 
 Now let us check that the choice of a primitive $\widetilde \beta^k$ of $\beta^{k}$ plays no role. At each step $0\le k \le q-2$, we chose an element $\widetilde \beta^k \in C^{q-k-1}(X, \cA^{p,k})$ such that $\delta(\widetilde \beta^k)=\beta^{k}$. Any other choice can be written $\widetilde \beta^k+\delta( \widetilde \gamma^k)$ where $\widetilde \gamma^k \in C^{q-k-2}(X, \cA^{p,k})$. Then $\beta^{k+1}$ becomes 
 \[\beta^{k+1}+\dbar \delta( \widetilde \gamma^k)=\delta(\widetilde \beta^{k+1}+\dbar \widetilde \gamma^k)=\delta(\widetilde \beta^{k+1}+\dbar \widetilde \gamma^k+\delta(\widetilde \gamma^{k+1})),\]
 so that $\beta^{k+2}$ becomes $\beta^{k+2}+\dbar \delta( \widetilde \gamma^{k+1})$, hence it is independent of the choice of $\widetilde \gamma^k$. 
 Once we arrive at the step $k=q-1$, we know that the previous choices (for $k\le q-2$) won't have any impact on the end result $\beta^q$, hence we only need to understand how $\beta^q$ depends on the possible choices at the level $q-1$. More precisely, our choices amount to replacing $\beta^{q-1}=\delta( \widetilde \beta^{q-1})$ by $\beta^{q-1}+\widetilde \gamma^{q-1}$, where this time $\widetilde \gamma^{q-1}$ is a $0$-cocycle with values in $\cA^{p,q-1}$, i.e. it is a well-defined $(p,q-1)$-form. This has the effect of replacing the closed $(p,q)$-form $\beta^q$ by $\beta^q+\dbar(\widetilde \gamma^{q-1})$, hence $[\beta^q]_{\dbar}:=\Phi_{p,q}(\beta)$ is well-defined.
 
 \medskip

 Let us now apply this construction to the class $\alpha:=[\omega]^p \in H^p(X,\Omega_X^p)$. We start with $p=1$. Using the previous notations, we have $\widetilde \beta^1_i=\sum_{\ell}\rho_\ell df_{i\ell}$. Since 
 \[\widetilde \beta^1_i-\widetilde \beta^1_j=df_{ij}=\sqrt{-1}\d \phi_{ij}=\sqrt{-1}\d\phi_i-\sqrt{-1}\d\phi_j,\]
 the $(1,0)$-forms $( \widetilde \beta^1_i-\sqrt{-1}\d\phi_i)$ on $U_i$ glue to a global $(1,0)$-form $u$ on $X$.  Moreover, we have
 \[\beta^1=\dbar \widetilde \beta^1=\omega+\dbar u.\]
 Let us now deal the case $p\ge 2$. Recall that $\beta^0=df_{i_0i_1}\wedge df_{i_1i_2}\wedge \cdots \wedge df_{i_{p-1}i_p}$ so that 
 \begin{equation*}
 \widetilde \beta^1_{i_0\cdots i_{p-1}}=df_{i_0i_1}\wedge df_{i_1i_2}\wedge \cdots \wedge df_{i_{p-2}i_{p-1}}\wedge (\sum_\ell\rho_\ell df_{i_{p-1}\ell})
 \end{equation*}
 hence
 \[\beta^1_{i_0\cdots i_{p-1}}=(-1)^{p-1}df_{i_0i_1}\wedge df_{i_1i_2}\wedge \cdots \wedge df_{i_{p-2}i_{p-1}}\wedge (\omega+\dbar u)\]
 thanks the previous case, where $u$ is a global $(1,0)$-form on $X$. Iterating, we get  \[(-1)^{\frac{p(p-1)}{2}}\alpha^p= (\omega+\dbar u)^p= \omega^p+\dbar \Big(\sum_{k=1}^p {p \choose k } u (\dbar u)^{k-1}\wedge \omega^{p-k}\Big),\]
 hence $\Phi([\omega]^p)= (-1)^{\frac{p(p-1)}{2}}[\omega^p]_{\dbar}$, as claimed.
  \end{proof}

  \subsubsection{From singular to de Rham cohomology}
 Similarly to the previous section, the Leray map allows one to associate to any singular cohomology class $\beta\in H^k(X,\mathbb R)$ a $d$-closed smooth $k$-form. 
 
 \begin{prop}
 \label{sing to dR}
 For any integer $0\le k  \le 2n$ there exists a morphism
\[\widehat \Phi: H^{k}(X, \R)\longrightarrow H^{k}_{d}(X)\]
such that if $k=2\ell$ and if $\omega$ is a locally $\ddbar$-exact smooth $(1,1)$-form on $X$, then the equality 
\[\widehat \Phi_{k}(\{\omega\}^\ell)= (-1)^{\frac{\ell(\ell- 1)}{2}}\{\omega^\ell\}_{d}\]
holds.  \end{prop}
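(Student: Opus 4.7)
The plan is to follow the proof of Proposition~\ref{coh to dol} essentially verbatim, with the substitutions $\Omega_X^p \rightsquigarrow \mathbb R_X$, $\cA_X^{p,\bullet} \rightsquigarrow \cA_X^\bullet$ and $\dbar \rightsquigarrow d$. Fix a good Stein cover $\cU = (U_i)$ of $X$ and a smooth partition of unity $(\rho_\ell)$ subordinate to it; such a partition exists on any normal complex space via local embeddings in euclidean space. Given $\beta \in H^k(X,\mathbb R)$ represented by a \v{C}ech $k$-cocycle $\beta^0 \in C^k(\cU,\mathbb R)$, I would iteratively construct a family $(\beta^j)_{j=0,\dots,k}$ with $\beta^j \in C^{k-j}(\cU,\cA_X^j)$, $\delta \beta^j = 0$ and $d \beta^j = 0$: set $\widetilde \beta^j_{i_0 \cdots i_{k-j-1}} := \sum_\ell \rho_\ell\, \beta^j_{i_0 \cdots i_{k-j-1} \ell}$, so that $\delta \widetilde \beta^j = \beta^j$, and define $\beta^{j+1} := d\widetilde \beta^j$. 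After $k$ iterations, $\beta^k \in C^0(\cU,\cA_X^k) = \Gamma(X,\cA_X^k)$ is a globally defined $d$-closed $k$-form which provides the representative of $\widehat{\Phi}(\beta)$. The key point is that this iteration uses partitions of unity only in the \v{C}ech direction, so it is insensitive to the possible failure of local exactness of the augmented de Rham complex of $X$ at its singular locus. Well-definedness of the resulting class $[\beta^k]_d \in H^k_d(X)$ is verified exactly as in the proof of Proposition~\ref{coh to dol}: modifying either the initial representative $\beta^0$ or the intermediate primitives $\widetilde \beta^j$ only changes $\beta^k$ by a $d$-exact globally defined $(k-1)$-form.

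For the compatibility with the class $\{\omega\}^\ell$, I would apply the construction above to the explicit \v{C}ech $2\ell$-cocycle $\bigl(\prod_{m=0}^{\ell-1} f_{i_{2m} i_{2m+1} i_{2m+2}}\bigr)$ described just after~\eqref{de Rham class}. The crucial point is that after the first $\ell$ rounds of Leray lifting, the \v{C}ech $\ell$-cochain with values in $\cA_X^\ell$ that emerges agrees, modulo a $d$-exact \v{C}ech cochain, with the holomorphic $\ell$-cocycle $\bigl(df_{i_0 i_1} \wedge \cdots \wedge df_{i_{\ell-1} i_\ell}\bigr)$ that serves as the starting point of the Dolbeault Leray lifting for $[\omega]^\ell$ in the proof of Proposition~\ref{coh to dol}. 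This relies on the identity $df_{ij} = \sqrt{-1}\, \d \phi_{ij}$ used throughout that proof, together with the fact that each $f_{ij}$ is holomorphic, which allows one to successively trade real-valued \v{C}ech cochains for their holomorphic lifts without affecting the final de Rham class. From this intermediate stage onward, the remaining $\ell$ de Rham iterations coincide, modulo $d$-exact forms, with the $\ell$ Dolbeault iterations of Proposition~\ref{coh to dol}; hence the identity $\Phi([\omega]^\ell) = (-1)^{\ell(\ell-1)/2} [\omega^\ell]_{\dbar}$ already established there transfers to $\widehat{\Phi}(\{\omega\}^\ell) = (-1)^{\ell(\ell-1)/2} \{\omega^\ell\}_d$, since the top form $\omega^\ell$ does not depend on the cohomology theory used.

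The main obstacle I expect is the sign tracking when passing from $\dbar$-primitives to $d$-primitives at each step of the iteration: since $d = \d + \dbar$, the additional $\d$-contribution produced at each stage must be absorbed into a globally defined $d$-exact form. This is bookkeeping rather than substance, and the sign $(-1)^{\ell(\ell-1)/2}$, which originates from the Koszul rule when reordering the $\ell$ factors of $(df_{ij})$-type forms past the $\dbar$ operator in the Dolbeault computation, survives unchanged in the de Rham setting.
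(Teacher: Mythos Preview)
Your proposal is correct and follows exactly the approach the paper intends: the paper's own ``proof'' of this proposition is the single sentence ``The proof of Proposition~\ref{sing to dR} is entirely similar to the one of Proposition~\ref{coh to dol} hence we will skip it.'' Your construction of $\widehat\Phi$ via the Leray staircase with the substitutions $\Omega_X^p\rightsquigarrow\R_X$, $\cA_X^{p,\bullet}\rightsquigarrow\cA_X^\bullet$, $\dbar\rightsquigarrow d$ is precisely that, and your well-definedness check is the verbatim translation of the one given for $\Phi$.

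One small remark on your second paragraph: routing the compatibility computation through the Dolbeault argument (first $\ell$ steps land on the holomorphic $\ell$-cocycle $(df_{i_0i_1}\wedge\cdots\wedge df_{i_{\ell-1}i_\ell})$, then the remaining $\ell$ steps mimic Proposition~\ref{coh to dol}) is legitimate, but it is slightly more roundabout than simply redoing the calculation directly with $d$. Concretely, for $\ell=1$ one may choose the (complex-valued) primitive $\widetilde\beta^0_{ij}=f_{ij}$, then $\widetilde\beta^1_i=\sum_\ell\rho_\ell\,df_{i\ell}=\sqrt{-1}\,\partial\phi_i+u$ with $u$ global, so that $\beta^2=d\widetilde\beta^1=\omega+du$; the iteration for general $\ell$ then proceeds exactly as in the paper, the sign $(-1)^{\ell(\ell-1)/2}$ arising for the same Koszul reason since $d$ anticommutes with the odd-degree factors $df_{ij}$ just as $\dbar$ does. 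Either way, you have supplied more detail than the paper does.
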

 
 The proof of Proposition~\ref{sing to dR} is entirely similar to the one of Proposition~\ref{coh to dol} hence we will skip it. 
 
%


\subsection{Orbifold Chern numbers via de Rham cohomology}
In this subsection we introduce a new definition of orbifold Chern numbers relying on de Rham cohomology. This turns out to be equivalent to the one previously considered in \cite{MR717614} and \cite{GK20}, but as already mentioned before, our point of view turns out to be very useful in applications.

\subsubsection{A quick refresher on orbifolds} Recall that on a complex orbifold $M$, the natural smooth forms are those compatible with the orbifold structure. Namely, an orbifold smooth form on $M$ is a smooth differential form $\alpha$ on $M_{\rm reg}$ such that for any uniformizing chart $p:U\to V$, the form $p^*\alpha|_{U_{\rm reg}}$ extends smoothly to $V$. 

We can consider the orbifold de Rham and Dolbeault cohomology, denoted respectively by $H^{\bullet}_{d, {\rm orb}}(M)$ and $H^{\bullet}_{\dbar, {\rm orb}}(M)$, and similarly for the compactly supported version. Moreover, an orbibundle on $M$ is a vector bundle on $M_{\rm reg}$ such that for any uniformizing chart $p:U\to V$, the vector bundle $p^*E|_{U_{\rm reg}}$  admits a locally free extension to $V$. In other words, the reflexive pullback $p^{[*]}E|_U$ is locally free. One can define smooth orbifold hermitian metrics $h$ and $E$; their associated Chern forms are $d$-closed (and $\dbar$-closed, too) orbifold smooth forms which then induce cohomology classes $c_k^{\rm orb}(E)\in H^{2k}_{d, {\rm orb}}(M)$ (resp. $c_k^{\rm orb}(E)\in H^{k,k}_{\dbar, {\rm orb}}(M)$). We refer to \cite{Blache96} for further details.\\

\subsubsection{Definition of orbifold Chern numbers}

From now on, we fix the following setup: 

\begin{setup}
\label{setup}
Let $(X,\omega)$ be a normal, compact Kähler variety of dimension $n\ge 2$. Moreover, we assume that there exists a proper analytic subset $Z\subset X$ of codimension at least three such that $X\setminus Z$ has at most quotient singularities, i.e. $X\setminus Z$ is an orbifold. Finally, we let $E$ be an orbibundle on $X\setminus Z$. 
\end{setup}
  
 Denote by $i:Z\hookrightarrow X$ the closed immersion and by $j:X\setminus Z\to X$ the open immersion. The exact sequence 
 \[0\longrightarrow j_!j^*\R_X\longrightarrow \R_X\longrightarrow i_*i^*\R_X\longrightarrow 0\]
yields
\begin{equation}\label{chorb1}
0=H^{2n-5}(Z, \R)\to H^{2n-4}_{\rm c}(X\setminus Z, \R)\overset{\tau}{\to} H^{2n-4}(X, \R)\to H^{2n-5}(Z, \R)=0
\end{equation}
where the vanishing of the cohomology groups is a consequence of the fact that $\dim_{\mathbb R} Z\le 2n-6$. In particular, $\tau$ is an isomorphism. 
\smallskip

Now, there is a de Rham-Weil isomorphism
\[ \sigma:H^{2n-4}_{\rm c}(X\setminus Z, \R) \overset{\sim}{\longrightarrow} H^{2n-4}_{d,\rm { c, orb}}(X\setminus Z)\]
where the latter is the de Rham cohomology of orbifold smooth forms with compact support in $X\setminus Z$. For an orbifold bundle $E$ over $X\setminus Z$, Graf and Kirschner \cite{GK20} then define
\[\widetilde c_2(E)\cdot \{\omega\}^{n-2}:=c_2^{\rm orb}(E) \cdot (\sigma \circ \tau^{-1})(\{\omega\}^{n-2}) \in \mathbb R\]
where $c_2^{\rm orb}(E)\in H^{4}_{d, {\rm orb}}(X\setminus Z)$ is the usual second orbifold Chern class in orbifold de Rham cohomology, represented by the Chern form associated to any orbifold hermitian metric on $E$. At this point though, this definition is somehow impractical to work with from a differential geometric point of view.

\medskip

We now explain how to adapt our arguments above to give a more tractable definition of $\widetilde c_2(E)\cdot \{\omega\}^{n-2}$. The key input is the following result. 
\begin{prop}
\label{factor dR}
With the notation of Proposition~\ref{sing to dR}, the morphism $\widehat \Phi$ factors as follows
\[
\begin{tikzcd}
&H^{2n-4}_{d,{\rm  c}}(X\setminus Z) \arrow[d]\\
H^{2n-4}(X, \R)\arrow[r, "\widehat \Phi"] \arrow[ur, dashed, "\widehat \Psi"] & H^{2n-4}_{d}(X)
\end{tikzcd}
\]
In particular, given any Kähler form $\omega$ on $X$, one can write $\omega^{n-2}=\widehat \Omega+d\gamma$ where $\widehat \Omega$ is supported away from $Z$ and $\gamma$ is a smooth $(2n-5)$-form. 
\end{prop}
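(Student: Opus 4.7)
The plan is to prove the factorization via a cutoff argument on a neighborhood of $Z$, and then deduce the decomposition of $\omega^{n-2}$ as a special case.

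First I would carry out the essential cutoff step: for every $\beta \in H^{2n-4}(X, \R)$, I claim any smooth $d$-closed representative $\eta$ of $\widehat\Phi(\beta)$ is $d$-cohomologous on $X$ to a form supported away from $Z$. Pick an open neighborhood $W$ of $Z$ that deformation retracts onto $Z$; such $W$ exists by the triangulability of complex analytic varieties ({\L}ojasiewicz--Hironaka), triangulating $X$ so that $Z$ is a subcomplex and taking a regular neighborhood. Since $\dim_\R Z \le 2n-6$ one has $H^{2n-4}(W, \R) = H^{2n-5}(W, \R) = 0$. By naturality of the \v Cech construction underlying Proposition~\ref{sing to dR} under open restriction,
\[[\eta|_W]_d = \widehat\Phi_W(\beta|_W) = 0 \in H^{2n-4}_d(W),\]
so $\eta|_W = d\gamma_W$ for some smooth $(2n-5)$-form $\gamma_W$ on $W$. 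Picking a cutoff $\chi\colon X \to [0,1]$ with $\chi \equiv 1$ on a smaller neighborhood of $Z$ and $\supp\chi \subset W$, the form $\widehat\Omega := \eta - d(\chi \gamma_W)$ is smooth on $X$, $d$-closed, agrees with $\eta$ outside $\supp\chi$, and vanishes on $\{\chi \equiv 1\}$. Hence $\widehat\Omega$ is a representative of $\widehat\Phi(\beta)$ supported away from $Z$.

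Next I would define $\widehat\Psi(\beta) := [\widehat\Omega|_{X\setminus Z}] \in H^{2n-4}_{d,c}(X\setminus Z)$ and verify independence from the choices above. If $\widehat\Omega_1, \widehat\Omega_2$ are two such representatives, their difference is $d\gamma$ for some smooth $(2n-5)$-form $\gamma$ on $X$; since $\widehat\Omega_1 = \widehat\Omega_2 = 0$ near $Z$, the restriction $\gamma|_W$ is $d$-closed, and using $H^{2n-5}(W, \R) = 0$ together with the same naturality argument gives $\gamma|_W = d\tau_W$ for some smooth form $\tau_W$ on $W$. Then $\gamma' := \gamma - d(\chi \tau_W)$ is smooth on $X$, supported away from $Z$, and satisfies $d\gamma' = \widehat\Omega_1 - \widehat\Omega_2$, showing $[\widehat\Omega_1|_{X\setminus Z}] = [\widehat\Omega_2|_{X\setminus Z}]$ in $H^{2n-4}_{d,c}(X\setminus Z)$. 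By construction $\widehat\Phi$ equals the extension-by-zero map composed with $\widehat\Psi$, proving the factorization.

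The \emph{in particular} statement follows by applying the construction to $\beta = \{\omega\}^{n-2}$, taking $\eta = (-1)^{(n-2)(n-3)/2}\omega^{n-2}$ as a representative of $\widehat\Phi(\{\omega\}^{n-2})$ thanks to Proposition~\ref{sing to dR}, and absorbing the sign; this yields $\omega^{n-2} = \widehat\Omega + d\gamma$ with $\widehat\Omega$ smooth on $X$, supported away from $Z$, and $\gamma$ a smooth $(2n-5)$-form on $X$. The main obstacle I anticipate is securing the existence of the retracting neighborhood $W$: this rests on the triangulation of complex analytic varieties compatibly with closed analytic subsets, a classical but nontrivial result of {\L}ojasiewicz and Hironaka. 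Once $W$ is in hand, the vanishing of $H^{2n-4}(W, \R)$ and $H^{2n-5}(W, \R)$, the naturality of Leray's construction, and the cutoff manipulations are all routine.
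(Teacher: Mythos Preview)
Your proof is correct and follows essentially the same route as the paper's: you obtain a retracting neighborhood of $Z$ via {\L}ojasiewicz--Hironaka triangulation to get $H^{2n-4}(W,\R)=H^{2n-5}(W,\R)=0$, use naturality of the Leray map to write a representative of $\widehat\Phi(\beta)$ as $d\gamma_W$ on $W$, cut off to obtain $\widehat\Omega$, and then check well-definedness using the vanishing in degree $2n-5$. The only cosmetic difference is that the paper checks independence by varying the primitive $\gamma$ and the cutoff $\chi$ separately, while you compare two finished forms $\widehat\Omega_1,\widehat\Omega_2$; in both cases the crux is that the relevant closed $(2n-5)$-form on $W$ lies in the image of the Leray map and is therefore exact since $H^{2n-5}(W,\R)=0$.
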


\begin{proof}
First, we claim that there exists a neighborhood $\cU$ of $Z$ in $X$ such that 
\begin{equation}
\label{vanishing 2}
H^{2n-5}(\cU,\R)=H^{2n-4}(\cU, \R)=0.
\end{equation}
Indeed,  we invoke the pair version, that is, (a semi-algebraic set, a closed subset) of {\L}ojasiewicz's theorem to triangulate $X$ in a way that $Z$ is a subcomplex, cf \cite{HironakaArcata} and references therein.  In particular, we can find a small neighborhood $\cU$ of $Z$ which is homotopy equivalent to $Z$, hence whose singular cohomology in degree greater than $\dim_{\mathbb R} Z$ vanishes.

\medskip

Now, let $\beta\in H^{2n-4}(X,\R)$ and let $\alpha$ be a representative of $\widehat \Phi(\beta)$. Since the formation of $\widehat \Phi$ commutes with restricting to open sets, we see that there exists a smooth $(2n-5)$-form $\gamma$ on $\cU$ such that $\alpha|_\cU=d\gamma$. Let $\chi$ be a smooth function with compact support in $\cU$ such that $\chi\equiv 1$ near $Z$, and let us set $\widehat \Omega=\alpha-d(\chi \gamma)$ which is supported away from $Z$. 

If in the above construction one picks another primitive of $\alpha|_{\cU}$, i.e. $\alpha|_\cU=d\gamma'$, then $\gamma-\gamma'$ arises as the image of a $(2n-5)$-cocycle on $\cU$ with values in $\R_\cU$ under the Leray map. Thanks to \eqref{vanishing 2}, we can find a smooth $(2n-6)$-form $\eta$ on $\cU$ such that $\gamma-\gamma'=d\eta$. In particular, if $\widehat \Omega':=\alpha-d(\chi' \gamma')$ for a possibly different cut-off function $\chi'$, then 
\[\widehat \Omega-\widehat \Omega'=d(\chi \gamma-\chi'\gamma')=d\Big((\chi-\chi')\gamma'-d\chi \wedge \eta\Big),\] 
and the form $(\chi-\chi')\gamma'-d\chi \wedge \eta$ is supported away from $Z$. This shows that the class $[\Omega]\in H^{2n-4}_{d,c}(X\setminus Z)$ only depends on $\beta$. Therefore one can unambiguously define $\widehat \Psi(\beta)=[\Omega]$, which indeed factors $\widehat \Phi$ since $\alpha=\Omega+d(\chi \gamma)$. 
\end{proof}

Recall that smooth forms on the orbifold $X\setminus Z$ are orbifold smooth. In particular, we have a natural map
\[r:H^{2n-4}_{d,{\rm  c}}(X\setminus Z)\to H^{2n-4}_{d, {\rm  c, orb}}(X\setminus Z).\]
We can now define the second orbifold Chern number as follows. 

\begin{defi}
\label{def c2 sing}
Let $E$ be an orbifold bundle on $X\setminus Z$ and let $\omega$ be a Kähler form on $X$. We set
\[c_2(E)\cdot \{\omega\}^{n-2}:= c_2^{\rm orb}(E) \cdot ( r\circ\widehat\Psi)(\{\omega\}^{n-2})\]
where $c_2^{\rm orb}(E)\in H^{4}_{d, {\rm orb}}(X\setminus Z)$ is the usual second orbifold Chern class in orbifold de Rham cohomology. 
\end{defi}

We can reformulate the above definition in the following more concrete way. Pick an orbifold hermitian metric $h$ on $E$ and write $\omega^{n-2}=\widehat \Omega+d\gamma$ where $\widehat \Omega$ is supported away from $Z$, which  is possible thanks to Proposition~\ref{factor dR}. Then we have
\begin{equation}
\label{formula}
c_2(E) \cdot \{\omega\}^{n-2}= \int_{X\setminus Z} c_2(E,h)\wedge \widehat \Omega,
\end{equation}
and the fact that the RHS is independent of any choice is guaranteed by Proposition~\ref{factor dR} asserting that the class of $\widehat \Omega$ in the relevant cohomology with compact support only depends on $\{\omega\}^{n-2}$.

\subsubsection{Comparison with previous definitions}

When $X$ is projective, $E$ is a reflexive sheaf on $X$ and  $L$ is an ample line bundle, then Mumford \cite{MR717614}  showed that one can attach to $E$ and $L$ rational numbers $\hat c_2(E)\cdot c_1(L)^{n-2}$ satisfying the following property. Pick general elements $H_i\in |mL|$ for $m\gg 1$ so that $S:=H_1\cap \cdots H_{n-2}\subset X$ is an orbifold surface and $E|_S$ is an orbibundle. Then we have $\hat c_2(E)\cdot c_1(L)^{n-2}=\frac{1}{m^{n-2}}c_2^{\rm orb}(E|_S)$. 

\begin{lem}
\label{lem Mum}
Let $(X,\omega)$ be a normal compact Kähler variety of dimension $n$ having at most quotient singularities in codimension two. Assume that $\{\omega\}\in H^2(X,\R)$ is the image of the first Chern class of an ample line bundle $L$. Then
\[\hat c_2(E)\cdot c_1(L)^{n-2}=c_2(E)\cdot \{\omega\}^{n-2}.\]
\end{lem}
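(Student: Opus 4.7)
The plan is to compute both sides of the claimed equality by evaluating them against a generic complete intersection surface. Fix $m \gg 0$ and generic sections $s_1, \ldots, s_{n-2} \in H^0(X, mL)$ cutting out a smooth orbifold surface $S := \{s_1 = \cdots = s_{n-2} = 0\}$. Since $Z$ has codimension at least three, a Bertini-type argument ensures that for a generic choice, $S \subset X \setminus Z$ and $E|_S$ is an orbibundle on $S$. Then for any orbifold Hermitian metric $h$ on $E|_{X \setminus Z}$, orbifold Chern-Weil on the surface $S$ together with Mumford's characterization recalled just before the lemma gives
\[
\hat c_2(E) \cdot c_1(L)^{n-2} \;=\; \frac{1}{m^{n-2}}\, c_2^{\rm orb}(E|_S) \;=\; \frac{1}{m^{n-2}} \int_S c_2(E|_S, h|_S).
\]

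For the other side, pick $\widehat\Omega$ as in Proposition~\ref{factor dR} so that by Definition~\ref{def c2 sing} and \eqref{formula},
\[
c_2(E) \cdot \{\omega\}^{n-2} \;=\; \int_{X \setminus Z} c_2(E, h) \wedge \widehat\Omega.
\]
The key claim is that the compactly-supported current $\frac{1}{m^{n-2}}[S]$ and the smooth orbifold form $\widehat\Omega$ represent the same class in the compactly-supported orbifold de Rham cohomology of $X \setminus Z$. By the de Rham-Weil isomorphism $\sigma$ and the isomorphism $\tau$ of \eqref{chorb1}, this reduces to checking that both push forward to $c_1(L)^{n-2} \in H^{2n-4}(X, \R)$. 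For $\widehat\Omega$ this is built into its construction since $\{\omega\}^{n-2} = c_1(L)^{n-2}$ by hypothesis, and for $[S]/m^{n-2}$ it follows from an iterated Poincaré-Lelong argument: choosing a smooth Hermitian metric $h_L$ on $L$ with curvature $\eta = c_1(L, h_L)$, writing $[H_i] = m\eta + dd^c \log|s_i|^2_{h_L^m}$, and forming the intersection product, one sees that $[S]/m^{n-2}$ differs from $\eta^{n-2}$ by a $d$-exact current on $X$, hence both represent the cohomology class $c_1(L)^{n-2}$.

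Granting the claim, we can write $\widehat\Omega - \frac{1}{m^{n-2}} [S] = dT$ for some compactly-supported $(2n-5)$-current $T$ on the orbifold $X \setminus Z$. Pairing with the $d$-closed orbifold smooth form $c_2(E, h)$ and integrating by parts on $X \setminus Z$ (justified by the compact support of $T$ and the smoothness plus closedness of $c_2(E,h)$ there) yields
\[
\int_{X \setminus Z} c_2(E,h) \wedge \widehat\Omega \;=\; \frac{1}{m^{n-2}} \int_S c_2(E|_S, h|_S),
\]
and combining with the previous identity concludes the proof.

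The main obstacle I foresee is the careful verification of the claim above: one must align several cohomology theories on the orbifold $X \setminus Z$ — singular, de Rham with smooth forms, and de Rham with currents — so that the pairing between $[S]/m^{n-2}$ and $c_2(E, h)$ can be identified with the smooth pairing against $\widehat\Omega$ modulo a compactly-supported coboundary. A secondary technical point is ensuring that $S$ can be chosen so that simultaneously $S \subset X \setminus Z$, $S$ inherits an orbifold structure, and $E|_S$ is an orbibundle; this likely requires performing the Bertini argument in uniformizing charts or on a suitable partial orbifold cover of $X \setminus Z$.
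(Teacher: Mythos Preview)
Your proof is correct in outline and reaches the same conclusion, but the route differs from the paper's. Both arguments reduce to showing that $\int_{X\setminus Z} c_2(E,h)\wedge\widehat\Omega = \frac{1}{m^{n-2}}\int_S c_2(E,h)$ for a generic complete-intersection surface $S\subset X\setminus Z$. You do this by working directly with the integration current $[S]$: via Poincaré--Lelong and the isomorphism $\tau$ you argue that $[S]/m^{n-2}$ and $\widehat\Omega$ represent the same class in $H^{2n-4}_{\mathrm c}(X\setminus Z,\R)$, hence differ by $dT$ for a compactly supported orbifold current $T$, and then integrate by parts. This is valid, but as you rightly flag, it requires knowing that on the orbifold $X\setminus Z$ the compactly supported de Rham cohomology computed with orbifold smooth forms agrees with that computed with orbifold currents, and that the class of $[S]$ transported through these identifications matches the singular-cohomology class of the cycle.

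The paper sidesteps this bookkeeping by never leaving the smooth category. It embeds $X\hookrightarrow\PP^N$ so that $\omega=\omega_{\mathrm{FS}}|_X$, and then invokes an explicit family of smooth $d$-closed $(n-2,n-2)$-forms $\Omega_\ep$ on $\PP^N$ with $[\Omega_\ep]=[\omega_{\mathrm{FS}}^{n-2}]$, $\mathrm{supp}(\Omega_\ep|_X)$ contained in a neighborhood of $S$ (hence in $X\setminus Z$), and $\Omega_\ep\wedge[X]\to[S]$ weakly. Each $\Omega_\ep|_X$ is then a legitimate choice of $\widehat\Omega$ in \eqref{formula}, so $c_2(E)\cdot\{\omega\}^{n-2}=\int_{X\setminus Z}c_2(E,h)\wedge\Omega_\ep$ for every $\ep$, and passing to the limit gives $c_2^{\mathrm{orb}}(E|_S)$. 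The advantage is that the comparison with Mumford's number is achieved by a smooth approximation of $[S]$ rather than by identifying cohomology theories; the cost is the reliance on the ambient projective space and an external construction of the forms $\Omega_\ep$.
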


\begin{proof}
Embed $X\subset \mathbb P^N$ in a way that $[\omega]=[\omega_{\rm FS}|_X]$. Moreover, up to scaling on can assume that there are divisors $H_1, \ldots, H_{n-2}\in |L|$ such that the complete intersection surface $S:=H_1\cap \ldots \cap H_{n-2}\cap X$ satisfies $S\cap Z=\emptyset$. It is not difficult (see e.g. \cite[Lemma~32]{CGG24}) to construct a family of explicit smooth $d$-closed $(n-2,n-2)$-form $(\Omega_\ep)$ in $\mathbb P^N$ such that 
 \begin{enumerate}
 \item $[\Omega_\ep]=[\omega_{\rm FS}^{n-2}]$ in $H^{n-2, n-2}_{d}(\mathbb P^N)$. 
 \item ${\Omega_\ep}|_X$ is supported in a neighborhood of $S$, hence away from $Z$.
 \item $\Omega_\ep \wedge [X] \to [S]$ weakly as $\ep \to 0$.  
 \end{enumerate}
 In particular, we see that for any hermitian orbifold bundle $(E,h)$ on $X\setminus Z$ and any $\ep>0$, one has 
\[c_2(E)\cdot \{\omega\}^{n-2}= \int_{X\setminus Z} c_2(E,h) \wedge \Omega_\ep,\]
hence
\[c_2(E)\cdot \{\omega\}^{n-2}=\lim_{\ep\to 0} \int_{X\setminus Z} c_2(E,h) \wedge \Omega_\ep=c_2^{\rm orb}(E|_S).\]
\end{proof}

Let us finish this section by showing that the orbifold Chern numbers as considered in Definition~\ref{def c2 sing} coincide with the ones considered by Graf-Kirschner. Since the latter were known to generalize Mumford's, Lemma~\ref{lem GK} generalizes Lemma~\ref{lem Mum}. 

\begin{lem}
\label{lem GK}
Let $E$ be an orbifold bundle on $X\setminus Z$ and let $\omega$ be a Kähler form on $X$. 
We have equality $\widetilde c_2(E)\cdot \{\omega\}^{n-2}=c_2(E)\cdot \{\omega\}^{n-2}$. 
\end{lem}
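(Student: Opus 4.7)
The plan is to prove the identity of classes
\[
r(\widehat\Psi(\{\omega\}^{n-2})) = \sigma(\tau^{-1}(\{\omega\}^{n-2})) \quad \text{in } H^{2n-4}_{d,c,\mathrm{orb}}(X\setminus Z),
\]
from which Lemma~\ref{lem GK} follows immediately by pairing both sides against $c_2^{\mathrm{orb}}(E, h)$ for any orbifold hermitian metric $h$ on $E$: both invariants then equal $\int_{X\setminus Z} c_2^{\mathrm{orb}}(E,h) \wedge \widehat\Omega$ via \eqref{formula}.

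The strategy is to verify this identity after testing against Borel-Moore homology classes $[c] \in H^{\mathrm{BM}}_{2n-4}(X\setminus Z, \R)$, since the Poincaré duality pairing on the orbifold $X\setminus Z$ is nondegenerate. For the left-hand side, Proposition~\ref{factor dR} provides the decomposition $\omega^{n-2} = \widehat\Omega + d\gamma$ with $\widehat\Omega$ compactly supported in $X\setminus Z$ and $\gamma$ compactly supported near $Z$; Stokes' theorem then yields $\int_c \widehat\Omega = \int_c \omega^{n-2}$, the boundary term vanishing thanks to $\partial c = 0$ and the compact support of $\gamma$. For the right-hand side, by the very definition of the orbifold de Rham-Weil isomorphism $\sigma$, pairing $\sigma(\tau^{-1}(\{\omega\}^{n-2}))$ with $[c]$ amounts to computing the singular pairing $\langle \tau^{-1}(\{\omega\}^{n-2}), [c]\rangle = \langle \{\omega\}^{n-2}, \tau_*[c]\rangle$, the second equality following from the naturality of excision.

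It therefore remains to establish the compatibility
\[
\int_c \omega^{n-2} = \langle \{\omega\}^{n-2}, \tau_*[c]\rangle,
\]
namely, that integration of the $d$-closed smooth form $\omega^{n-2}$ against a singular cycle on $X$ recovers the \v{C}ech-cohomological pairing of the class $\{\omega\}^{n-2}$. This is an intrinsic compatibility of the Leray map $\widehat\Phi$ from Proposition~\ref{sing to dR}: step-by-step inspection of the inductive construction there shows that integration against chains is preserved at each level. The main technical obstacle is formalizing this pairing on the possibly non-orbifold variety $X$, where a full de Rham theorem for paper-smooth forms need not hold; this is circumvented by restricting to BM chains realized by sufficiently regular sub-complexes of the {\L}ojasiewicz triangulation adapted to $Z$ already invoked in the proof of Proposition~\ref{factor dR}. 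Since such chains span $H^{\mathrm{BM}}_{2n-4}(X\setminus Z, \R)$, the desired identity of cohomology classes follows, and with it Lemma~\ref{lem GK}.
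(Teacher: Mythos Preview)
Your target identity $r\circ\widehat\Psi(\{\omega\}^{n-2}) = \sigma\circ\tau^{-1}(\{\omega\}^{n-2})$ is precisely what the paper establishes, but your route through Poincar\'e duality and pairing against Borel--Moore cycles is circuitous and leaves a real loose end. The ``main technical obstacle'' you flag is genuine: on the singular space $X$ the sheaves $\cA_X^{\bullet}$ need not resolve $\R_X$, so the compatibility $\int_c \omega^{n-2} = \langle \{\omega\}^{n-2}, \tau_*[c]\rangle$ between integration of smooth forms and the \v{C}ech pairing is not automatic. Your proposed workaround via ``sufficiently regular sub-complexes of the {\L}ojasiewicz triangulation'' is too vague to count as a proof---you neither specify the regularity needed, nor carry out the ``step-by-step inspection'' you allude to, nor justify the Stokes step for non-compact BM cycles whose closure meets $Z$ against $d\gamma$, whose support concentrates near $Z$ as well.

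The paper bypasses all of this with a one-line diagram argument. One considers
\[
\begin{tikzcd}
H^{2n-4}_{\rm c}(X\setminus Z, \R) \arrow [d, "\sigma"] \arrow[r, "\tau"] & H^{2n-4}(X, \R)  \arrow[d, "\widehat \Psi"]  \\
H^{2n-4}_{d, {\rm  c, orb}}(X\setminus Z) &\arrow[l, "r"] H^{2n-4}_{d,{\rm  c}}(X\setminus Z)
\end{tikzcd}
\]
and observes that $\widehat\Psi\circ\tau$ is nothing but the usual Leray (de Rham--Weil) map from compactly supported singular cohomology to compactly supported de Rham cohomology on $X\setminus Z$: a cocycle compactly supported in $X\setminus Z$, extended by zero to $X$ and fed through the Leray construction, already yields a form compactly supported away from $Z$, so the correction step defining $\widehat\Psi$ is vacuous on such classes. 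Post-composing with $r$ then gives the orbifold Leray map, which is $\sigma$ by definition. Hence $r\circ\widehat\Psi=\sigma\circ\tau^{-1}$, and the lemma follows---with no duality, no integration against chains, and no triangulation needed.
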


\begin{proof}
We have the following diagram
\[
\begin{tikzcd}
H^{2n-4}_{\rm c}(X\setminus Z, \R) \arrow [d, "\sigma"] \arrow[r, "\tau"] & H^{2n-4}(X, \R)  \arrow[d, "\widehat \Psi"]  \\
H^{2n-4}_{d, {\rm  c, orb}}(X\setminus Z, \R) &\arrow[l, "r"] H^{2n-4}_{d,{\rm  c}}(X\setminus Z)  
\end{tikzcd}
\]
  where $j$ is the natural maps induced by the inclusion of smooth forms in orbifold smooth forms, over $X\setminus Z$. We claim that it is a commutative diagram. Indeed, this is because the map $\widehat \Psi\circ \tau$ is the usual Leray map from singular cohomology to de Rham cohomology (with compact support). In other words, we have $r\circ \widehat \Psi=\sigma\circ \tau^{-1}$, hence $\widetilde c_2(E)\cdot \{\omega\}^{n-2}=c_2(E)\cdot  \{\omega\}^{n-2}$ as claimed. 
  \end{proof}


 \subsection{Orbifold Chern numbers via Dolbeault cohomology}
 \label{sec OCN}
  
In Setup~\ref{setup}, we can associate to the Kähler form $\omega$ its cohomology class $\kappa(\omega)\in H^1(X,\PH_X)$ and then either consider
\begin{itemize}
\item the singular cohomology class $\{\omega\}\in H^2(X,\R)$, or
\item the coherent cohomology class $[\omega]\in H^1(X,\Omega_X^1)$.
\end{itemize}
In the first case, Proposition~\ref{factor dR} allows us to associate to $\{\omega\}^{n-2}$ a de Rham cohomology class of degree $2n-4$ on $X$ with compact support in $X\setminus Z$. 

\medskip

Working with coherent cohomology rather than singular cohomology has a few advantages which we now exhibit. Since $\dim Z \le n-3$, it follows from \cite{Ohsawa84} (see also \cite[Theorem~2]{DemQconvex}) that $Z$ is strongly $(n-2)$-complete, i.e. it admits a smooth (exhaustion) function $\varphi$ such that $i\d \dbar \varphi$ has at most $n-3$ negative or zero eigenvalues. By \cite[Theorem~1]{DemQconvex}, this implies that $Z$ admits a fundamental family of strongly $(n-2)$-complete neighborhoods 
 \[Z\hookrightarrow \mathcal V\subset X.\]
  By Andreotti-Grauert's vanishing theorem \cite{AG62}, this implies that for any coherent sheaf $\cF$ on $X$, we have 
 \begin{equation}
 \label{vanishing}
 H^{n-2}(\mathcal V, \cF)=0.
 \end{equation}
which can be seen as the coherent analog of \eqref{vanishing 2}. As a consequence of Proposition~\ref{coh to dol}, we get the following statement which is a partial analog of Proposition~\ref{factor dR}.
\begin{prop}
\label{factor} 
%
%
Given any class $\beta \in H^{n-2}(X,\Omega_X^{n-2})$, the Dolbeault class $\Phi(\beta)\in H^{n-2,n-2}_{\dbar}(X)$ admits a smooth representative whose support is contained in the complement of $Z$.
In particular, there exists a decomposition 
\begin{equation}
\label{decomp}\omega^{n-2}=\Omega+\dbar \gamma
\end{equation}
 such that $\Omega$ is a smooth $(n-2,n-2)$-form such that $\supp (\Omega)\subset X\setminus Z$ and such that $\gamma$ is a smooth $(n-2,n-3)$-form.
\end{prop}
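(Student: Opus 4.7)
The plan is to follow the strategy of Proposition~\ref{factor dR}, trading singular/de Rham cohomology for coherent/Dolbeault cohomology and {\L}ojasiewicz triangulations for Andreotti--Grauert vanishing. The key enabling fact, recorded just before the statement, is that $Z$ admits a fundamental system of strongly $(n-2)$-complete open neighborhoods $\cV\subset X$ for which $H^{n-2}(\cV,\cF)=0$ holds for every coherent sheaf $\cF$.

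I would first check the naturality of the Leray map: for any open $U\subset X$ and any class $\beta$, one has $\Phi_U(\beta|_U)=\Phi(\beta)|_U$ in $H^{n-2,n-2}_{\dbar}(U)$. This is immediate from the fact that the iterative construction of Proposition~\ref{coh to dol} commutes with restriction once suitable primitives are chosen, combined with the independence-of-choices part of the same proof. Taking $U=\cV$ and invoking Andreotti--Grauert vanishing \eqref{vanishing} in the form $H^{n-2}(\cV,\Omega_X^{n-2})=0$, it follows that any smooth representative $\alpha$ of $\Phi(\beta)$ satisfies $\alpha|_{\cV}=\dbar\gamma$ for some smooth $(n-2,n-3)$-form $\gamma$ on $\cV$.

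A standard cutoff argument then finishes the first assertion. Pick $\chi\in \cC^{\infty}_c(\cV)$ equal to $1$ in a smaller neighborhood of $Z$, extend $\chi\gamma$ by zero to $X$, and set $\Omega:=\alpha-\dbar(\chi\gamma)$. The form $\Omega$ is a smooth $(n-2,n-2)$-form on $X$, $\dbar$-cohomologous to $\alpha$, and vanishes wherever $\chi=1$, hence $\supp(\Omega)\subset X\setminus Z$. Applying this to $\beta=[\omega]^{n-2}$ and absorbing the overall sign from Proposition~\ref{coh to dol} yields the announced decomposition $\omega^{n-2}=\Omega+\dbar\gamma'$.

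The principal technical point is the combination of Andreotti--Grauert with the naturality of $\Phi$, and checking the latter requires some care because the Leray construction uses a partition of unity on $X$ rather than on $\cV$. A concrete alternative that sidesteps naturality altogether is to argue directly at the \v{C}ech level: choose a \v{C}ech representative $\beta^0$ of $\beta$, use Andreotti--Grauert to write $\beta^0|_{\cV}=\delta\eta^0$ where $\eta^0$ is a \v{C}ech cochain of holomorphic $(n-2)$-forms on $\cV$, and select $\eta^0$ itself as the first primitive in the iterative construction performed on $\cV$. Then $\beta^1|_{\cV}=\dbar\eta^0=0$, so all subsequent $\beta^k|_{\cV}$ vanish, and the independence-of-choices part of Proposition~\ref{coh to dol} forces $\alpha|_\cV$ to be $\dbar$-exact, which is exactly what the cutoff step needs.
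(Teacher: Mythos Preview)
Your proposal is correct and follows essentially the same approach as the paper: use Andreotti--Grauert vanishing \eqref{vanishing} on a strongly $(n-2)$-complete neighborhood $\cV$ of $Z$, combine it with the fact that $\Phi$ commutes with restriction to open sets to deduce that $\alpha|_{\cV}$ is $\dbar$-exact, and then apply the cutoff construction to produce $\Omega=\alpha-\dbar(\chi\gamma)$ supported in $X\setminus Z$. Your additional \v{C}ech-level variant is a nice explicit reformulation of the naturality step, but it is not needed and the paper simply asserts the compatibility with restriction without further comment.
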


\begin{proof}
Choose a representative $\alpha\in \Phi(\beta)$. By \eqref{vanishing}, we have $\beta|_{\cV}=0$. Since $\Phi$ commutes with restrictions to an open set, we infer that $\Phi(\beta)|_{\mathcal V}=[\alpha|_{\mathcal V}]_{\dbar}=0$. This means that there exists a smooth $(n-2, n-3)$-form $\gamma_{\mathcal V}$ on $\mathcal V$ such that  
\begin{equation*}
\alpha|_{\mathcal V}=\dbar \gamma_{\mathcal V}.
 \end{equation*}
 Now, let $\chi$ be a cutoff function such that $\chi\equiv 1$ near $Z$ and $\mathrm{Supp}(\chi)\Subset \cV$, set $\gamma:=\chi \gamma_{\mathcal V}$. The $\dbar$-closed $(n-2, n-2)$-form 
 \begin{equation}
 \label{Omega}
 \Omega:=\alpha-\dbar \gamma
 \end{equation}
 is indeed compactly supported away from $Z$ and satisfies $[\alpha]_{\dbar}= [\Omega]_{\dbar}$. The last statement of the proposition follows by taking $\alpha=\omega^{n-2}$, which is legitimate by the second half of Proposition~\ref{coh to dol}. 
 \end{proof}

 At this point, one would like to argue that $[\Omega]\in H^{n-2,n-2}_{\dbar,c}(X\setminus Z)$ in Dolbeault cohomology with compact support only depends on $[\omega]^{n-2}\in H^{n-2}(X,\Omega_X^{n-2})$, just like in Proposition~\ref{factor dR}. For instance, it would be a consequence of the vanishing $H^{n-3}(\mathcal V, \Omega_{\mathcal V}^{n-2})=0$ (which e.g. holds if $Z$ has codimension at least four by Andreotti-Grauert vanishing theorem), but we do not know if the latter holds in full generality. In principle, this could be an obstruction to unambiguously define orbifold Chern numbers via Dolbeault cohomology. 
 
 However, we need something a bit weaker to properly define these numbers. Namely, one would need to know that given any hermitian metric $h$ on $E$ and any smooth $(n-2, n-3)$-form $\eta$ such that we have
 \[\supp (\dbar \eta)\subset \mathcal V\setminus Z,\]
then the equality
\begin{equation}
 \label{chorbi2}\int_{X\setminus Z} c_2(E,h)\wedge \dbar \eta=0\end{equation}
 holds. The problem here is that a-priori we don't know anything about
the support of $\eta$ itself, other that it is contained in $\mathcal V$. The above vanishing seems difficult to obtain by working directly on $X$ or on a resolution of $X$ where the pullback of $E$ becomes torsion-free modulo its torsion.
\smallskip

\noindent The equality \eqref{chorbi2} becomes elementary if one uses the existence of so-called orbifold partial resolutions \cite{KO}, 
\cite{Ou}, as well as \cite{Ko26},
which we now recall.  
 
 In the Setup~\ref{setup} and up to enlarging $Z$, there exists a compact orbifold $\widehat X$ and a surjective, holomorphic map $g: \widehat X\to X$ such that $g$ is an isomorphism over $X\setminus Z$ and such that $E$, viewed as an orbibundle over $X\setminus Z \simeq \widehat X \setminus \mathrm{Exc}(g)$, extends to an orbibundle $\widehat E$ on $\widehat X$. 
 Using the existence of $g$, one can prove:
 
 \begin{prop}
 \label{well posed}
 The integral
 \[\int_{X\setminus Z} c_2(E,h)\wedge \Omega\]
 is independent of the choice of an orbifold hermitian metric $h$ on $E$ and a form $\Omega$ as in \eqref{decomp}. Moreover, it coincides with $c_2(E)\cdot \{\omega\}^{n-2}$ from Definition~\ref{def c2 sing}.
 \end{prop}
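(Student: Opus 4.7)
The plan is to lift the problem to the compact orbifold partial resolution $g:\widehat X\to X$ furnished by \cite{KO,Ou,Ko26}, on which $E$ extends to a genuine orbibundle $\widehat E$, and then to exploit orbifold Stokes in the familiar setting of a compact smooth orbifold. Concretely, I choose any orbifold hermitian metric $\widehat h$ on $\widehat E$; the second orbifold Chern form $c_2(\widehat E,\widehat h)$ is a $d$-closed orbifold smooth $4$-form on $\widehat X$, hence $\dbar$-closed since it is of pure type $(2,2)$.

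The key observation is that all the relevant forms on $X$ admit natural orbifold smooth pullbacks to $\widehat X$. Since $\Omega$ is supported in $X\setminus Z$ and $g$ is a biholomorphism there, $g^\star\Omega$ extends by zero to an orbifold smooth form on $\widehat X$ supported away from $\mathrm{Exc}(g)$; similarly $\gamma$ and $\omega^{n-2}$ are smooth on $X$ in the paper's sense, so $g^\star\gamma$ and $g^\star\omega^{n-2}$ are orbifold smooth on $\widehat X$. The decomposition $\omega^{n-2}=\Omega+\dbar\gamma$ pulls back to $g^\star\omega^{n-2}=g^\star\Omega+\dbar g^\star\gamma$, and orbifold Stokes on the compact orbifold $\widehat X$ gives
\[\int_{\widehat X} c_2(\widehat E,\widehat h)\wedge \dbar g^\star\gamma=\int_{\widehat X}\dbar\bigl(c_2(\widehat E,\widehat h)\wedge g^\star\gamma\bigr)=0.\]
Since $g$ is a biholomorphism over $X\setminus Z$, I deduce
\[\int_{X\setminus Z} c_2(E,h)\wedge\Omega=\int_{\widehat X} c_2(\widehat E,\widehat h)\wedge g^\star\Omega=\int_{\widehat X} c_2(\widehat E,\widehat h)\wedge g^\star\omega^{n-2}.\]
The right-hand side is visibly independent of $\Omega$ and $\gamma$; it is also independent of the choice of $\widehat h$ (and therefore of $h$) by the standard Chern--Weil transgression on the compact orbifold $\widehat X$, since two orbifold Chern forms differ by a $d$-exact orbifold form and $g^\star\omega^{n-2}$ is $d$-closed.

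For the coincidence with Definition~\ref{def c2 sing}, I would run the identical argument with the de Rham decomposition $\omega^{n-2}=\widehat\Omega+d\gamma'$ furnished by Proposition~\ref{factor dR}: orbifold Stokes on $\widehat X$ gives
\[\int_{X\setminus Z}c_2(E,h)\wedge\widehat\Omega=\int_{\widehat X}c_2(\widehat E,\widehat h)\wedge g^\star\omega^{n-2},\]
so by \eqref{formula} this common value is precisely $c_2(E)\cdot\{\omega\}^{n-2}$. The main -- indeed essentially only -- obstacle is a routine verification that the pullback under $g$ of a smooth form on the normal variety $X$ (in the paper's sense of extending smoothly under local embeddings) is genuinely an orbifold smooth form on $\widehat X$, which is what makes the Stokes formula and the integrals on $\widehat X$ legitimate; this compatibility is built into the construction of the orbifold partial resolution in \cite{KO,Ou,Ko26}.
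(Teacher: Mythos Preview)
Your approach is essentially the same as the paper's: lift to the orbifold partial resolution $g:\widehat X\to X$, use that $g^\star\gamma$ is (orbifold) smooth on the compact orbifold $\widehat X$, and apply orbifold Stokes there. The paper organizes things slightly differently---it first shows $h$-independence directly on $X\setminus Z$, then $\Omega$-independence by lifting the difference $\Omega-\Omega'=\dbar(\gamma-\gamma')$ to $\widehat X$---but the substance is the same.

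There is one small gap. Your displayed identity
\[\int_{X\setminus Z} c_2(E,h)\wedge\Omega=\int_{\widehat X} c_2(\widehat E,\widehat h)\wedge g^\star\Omega\]
is only valid when $h$ is the restriction of $\widehat h$ to $X\setminus Z$; so your argument establishes independence of $\Omega$ and of $\widehat h$, but the parenthetical ``and therefore of $h$'' only covers those orbifold metrics on $E$ over $X\setminus Z$ that extend to $\widehat E$. An arbitrary orbifold metric $h$ on $E|_{X\setminus Z}$ need not extend across $\mathrm{Exc}(g)$. The fix is the paper's first step: for any two orbifold metrics $h,h'$ on $E|_{X\setminus Z}$ one has $c_2(E,h)-c_2(E,h')=\dbar\alpha$ with $\alpha$ orbifold smooth, and then $\int_{X\setminus Z}\dbar\alpha\wedge\Omega=0$ directly by orbifold Stokes on $X\setminus Z$ since $\Omega$ has compact support there and $\dbar\Omega=0$. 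With this remark added, your proof is complete.
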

 
 \begin{proof}
 First, if $h'$ is another orbifold hermitian metric on $E$, then $c_2(E,h)-c_2(E,h')=\dbar \alpha$ for some orbifold smooth $(4,3)$-form and $\int_{X\setminus Z}\dbar \alpha\wedge \Omega=0$ since $\dbar \Omega=0$. Next, assume that we have two decompositions $\omega^{n-2}=\Omega+\dbar \gamma = \Omega'+\dbar \gamma'$ as in \eqref{decomp} and set $\eta:=\gamma-\gamma'$. Next, choose $h$ such that $g^*h$ extends to an orbifold hermitian metric $\widehat h$ on $\widehat E$. Then we have 
 \begin{eqnarray*}
 \int_{X\setminus Z} c_2(E,h)\wedge (\Omega-\Omega')&=&\int_{\widehat X\setminus \mathrm{Exc}(g)} g^*(c_2(E,h)\wedge \dbar \eta)\\
 &=&\int_{\widehat X} c_2(\widehat E,\widehat h)\wedge \dbar (g^*\eta)
 \end{eqnarray*}
 which vanishes by Stokes formula. Here, we have used that $g^*\eta$ is a smooth form on the orbifold $\widehat X$, hence it is orbifold smooth too and Stokes formula can be legitimately applied.
 
 Similarly, if $\omega^{n-2}= \Omega'+d\gamma'$ as in \eqref{factor dR}, then 
  \begin{eqnarray*}
 \int_{X\setminus Z} c_2(E,h)\wedge (\Omega-\Omega')&=&\int_{\widehat X\setminus \mathrm{Exc}(g)} g^*(c_2(E,h)\wedge (\dbar \gamma-d \gamma')\\
 &=&\int_{\widehat X} c_2(\widehat E,\widehat h)\wedge \dbar (g^*\gamma)-\int_{\widehat X} c_2(\widehat E,\widehat h)\wedge d (g^*\gamma')
 \end{eqnarray*}
 which vanishes by Stokes formula, since $c_2(\widehat E,\widehat h)$ is an orbifold smooth $d$ and $\dbar$-closed form and both $g^*\gamma$ and $g^*\gamma'$ are smooth, hence orbifold smooth forms.  The proposition now follows from \eqref{formula}. 
 \end{proof}

\begin{rem}
As we see from the discussion above, one can very well define the orbifold Chern numbers in the context \ref{setup} via Dolbeault cohomology, by using the existence of the partial resolution of singularities. It would be very interesting to know if this can be done without invoking this result. The coholomogy class we are interested in, induced by $\dbar \eta$ in \eqref{chorbi2}, obviously belongs to the kernel of the map
\[H^{n-2,n-2}_{\dbar,c}(\mathcal V\setminus Z)\to H^{n-2,n-2}_{\dbar }(\mathcal V),\]
which does not seem to be easy to identify.
\end{rem}

\section{Bogomolov-Gieseker inequality}

\begin{thm}
\label{thm BG}
In our main setup, we have
\[(2rc_2(\cF) -(r-1)c_1(\cF)^2)\cdot [\omega_X]^{n-2} \ge 0,\]
and equality occurs if and only if $\cF|_{\Xr}$ is locally free and projectively unitary flat. 
\end{thm}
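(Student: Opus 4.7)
The plan is to follow the classical Bogomolov-Lübke-Gieseker strategy: derive the pointwise inequality at the level of Chern-Weil forms from the Hermite-Einstein equation, integrate over $X_{\rm reg}\setminus Z$, and then identify the resulting integral with the expected orbifold Chern number via the new interpretation developed in Section~\ref{OCC}.

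Concretely, the Hermite-Einstein equation \eqref{HYM} yields the pointwise Bogomolov-Lübke identity
\[
\big(2rc_2(\cF,h_\cF)-(r-1)c_1(\cF,h_\cF)^2\big)\wedge \omega^{n-2} = c_{n,r}\,|F^\circ_{h_\cF}|^2_{h_\cF,\omega}\, \omega^n \ge 0
\]
on $X_{\rm reg}\setminus Z$, where $F^\circ_{h_\cF}$ is the trace-free part of the Chern curvature. The $L^2$ bound \eqref{L2} makes the integral of this form over $X_{\rm reg}\setminus Z$ absolutely convergent and therefore non-negative. It remains to identify this integral with $(2rc_2(\cF)-(r-1)c_1(\cF)^2)\cdot[\omega_X]^{n-2}$; note that $\{\omega\}=\{\omega_X\}$ in $H^1(X,\PH_X)$ since $\omega-\omega_X=dd^c$ of a continuous function by \eqref{omega}. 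Using Proposition~\ref{factor} I decompose $\omega^{n-2}=\Omega+\dbar\gamma$ with $\supp \Omega\subset X\setminus Z$ and $\gamma$ a smooth $(n-2,n-3)$-form, chosen so that $\gamma\in \cE^{n-2,n-3}(X)$. Proposition~\ref{well posed} (together with its easier line-bundle analogue for $c_1^2$, applied to $\det\cF$) then identifies
\[
\int_{X\setminus Z} c_2(\cF,h_\cF)\wedge \Omega = c_2(\cF)\cdot [\omega_X]^{n-2}, \qquad \int_{X\setminus Z} c_1(\cF,h_\cF)^2 \wedge \Omega = c_1(\cF)^2\cdot [\omega_X]^{n-2}.
\]
The matching of the two integrals (against $\omega^{n-2}$ and against $\Omega$) thus reduces to the vanishing of
\[
\int_{X_{\rm reg}\setminus Z}\!\Tr\!\big(\Theta(\cF,h_\cF)\wedge\Theta(\cF,h_\cF)\big)\wedge \dbar\gamma = 0, \quad \int_{X_{\rm reg}\setminus Z}\!\Tr\Theta(\cF,h_\cF)\wedge \Tr\Theta(\cF,h_\cF)\wedge\dbar\gamma = 0,
\]
which is exactly what Theorem~\ref{closed} provides, the second identity being strictly simpler as it concerns only curvature of a $\Q$-line bundle.

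For the equality case, vanishing of the integral forces $F^\circ_{h_\cF}\equiv 0$ on $X_{\rm reg}\setminus Z$, so $h_\cF$ carries a flat $\mathrm{PU}(r)$-connection there. Since $Z\cap X_{\rm reg}$ has complex codimension at least two inside the smooth manifold $X_{\rm reg}$, the inclusion $X_{\rm reg}\setminus Z\hookrightarrow X_{\rm reg}$ induces an isomorphism on fundamental groups; the associated representation therefore extends. Combined with reflexivity of $\cF$, local unitary frames coming from this projective flat structure provide local trivializations of $\cF|_{X_{\rm reg}}$, establishing both local freeness and projective unitary flatness. The converse is immediate from Step~1.

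The main technical obstacle is ensuring that the primitive $\gamma$ produced by Proposition~\ref{factor} can actually be chosen in $\cE^{n-2,n-3}(X)$: in particular, $\gamma$ must be orbifold-smooth on $W$ so as to remain bounded with respect to the hybrid metric $\omega$, which degenerates transversally to $X_{\rm sing}$. The cleanest way to achieve this is to run the Leray-type construction of Proposition~\ref{coh to dol} on the orbifold partial resolution $g:\widehat X\to X$ of Proposition~\ref{well posed}, where every form is honestly orbifold-smooth and Stokes' formula applies without effort, and then to transfer the resulting identities down to $X$ through $g^*$.
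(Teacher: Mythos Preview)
Your overall strategy is exactly that of the paper: pointwise Bogomolov--L\"ubke identity from \eqref{HYM}, identification of the integral with the orbifold Chern number via the decomposition of Section~\ref{OCC}, and the vanishing of the cross term supplied by Theorem~\ref{closed}.

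The one substantive technical point where the paper proceeds differently is precisely the obstacle you flag in your last paragraph. Since the hybrid metric $\omega$ is only orbifold smooth on $W$, the form $\omega^{n-2}$ is \emph{not} a smooth $(n-2,n-2)$-form on $X$, so Proposition~\ref{factor} does not apply to it directly, and the resulting $\gamma$ is not automatically in $\cE^{n-2,n-3}(X)$. Rather than detouring through the partial resolution $\widehat X$ as you suggest, the paper first decomposes the genuinely smooth form $\omega_X^{n-2}=\widehat\Omega+d\gamma$ (so $\gamma$ is smooth on $X$, hence automatically in $\cE^{2n-5}(X)$, with $\supp\widehat\Omega\subset W'$ where $h_\cF$ is orbifold), and then writes by hand
\[
\omega^{n-2}-\omega_X^{n-2}=d\tau,\qquad \tau:=d^c\varphi\wedge\sum_{k=0}^{n-3}\omega^k\wedge\omega_X^{\,n-3-k}.
\]
The quasi-isometry \eqref{qiso} shows that $d^c\varphi$ and $\omega_X$ are bounded with respect to $\omega$, and $\tau$ vanishes near $Z$ because $\varphi=\ep_0\theta\varphi_{\rm orb}$ does; hence $\alpha:=\gamma+\tau\in\cE^{2n-5}(X)$ and Theorem~\ref{closed} applies to $\alpha$ directly. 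This is more elementary than transferring the Leray construction through $g$.

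For the converse in the equality case, ``immediate from Step~1'' is a bit quick: the given projectively flat metric $h$ on $\cF|_{\Xr}$ need not be an orbifold metric on $X\setminus Z$, so one cannot compute the orbifold Chern number with it directly. The paper passes to $\mathrm{End}(\cF)$, which is genuinely hermitian \emph{flat}; the isomorphism $\pi_1(V_0)\simeq\pi_1(V)$ for uniformizing charts then extends it to a flat orbibundle on $X\setminus Z$, forcing $c_2^{\rm orb}(\mathrm{End}\cF)=0$ and hence the desired vanishing via $2rc_2(\cF)-(r-1)c_1(\cF)^2=c_2(\mathrm{End}\cF)$.
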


\begin{proof}
Let us start with the easy direction and assume that $\cF|_{\Xr}$ is locally free and projectively unitary flat. This means that there exists a smooth hermitian metric $h$ on $\cF|_{\Xr}$ such that its Chern curvature tensor satisfies $\Theta_h(\cF)=\alpha \mathrm{Id}_{\cF}$ for some $(1,1)$-form $\alpha$ on $\Xr$. It is not completely clear at this point that $h$ is an orbifold metric on $X\setminus Z$ and that it can be used to compute Chern numbers. Instead, we consider the induced hermitian vector bundle $(\mathrm{End}(\cF), \mathrm{End}(h))$ on $\Xr$, which is flat. Now, given an open subset $U\subset X\setminus Z$ admitting a uniformizing chart $p:V\to U$, the hermitian flat vector bundle $p^* (\mathrm{End}(\cF), \mathrm{End}(h))|_{U_{\rm reg}}$ on $V_0:=p^{-1}(U_{\rm reg})$ extends to a hermitian flat bundle on $V$. Indeed, $p^* (\mathrm{End}(\cF), \mathrm{End}(h))|_{U_{\rm reg}}$ is given by a representation $\pi_1(V_0)\to \mathrm{U}(r^2, \C)$ and the natural morphism $\pi_1(V_0)\to \pi_1(V)$ is an isomorphism given that the complement of $V\setminus V_0$ has codimension at least two in the smooth manifold $V$. This shows that $\mathrm{End}(\cF)$ can be endowed with a structure of hermitian flat orbibundle on $X\setminus Z$. In particular, we find 
\[c_2^{\rm orb}(\mathrm{End}(\cF|_{X\setminus Z}))=0\in H^{4}_{{\rm dR, orb}}(X\setminus Z).\] 
Now, the orbifold Chern classes of the orbibundle $\cF|_{X\setminus Z}$ satisfy 
$2rc_2^{\rm orb}(\cF|_{X\setminus Z}) -(r-1)c_1^{\rm orb}(\cF|_{X\setminus Z})^2=c_2^{\rm orb}(\mathrm{End}(\cF|_{X\setminus Z}))$, as one can see e.g. from the proposition in \cite[p23]{Blache96}. This concludes the proof of the first part of the theorem. 

\medskip

In the other direction, let $h_{\cF}$ be the singular hermitian metric on $\cF$ provided by \cite{C++} satisfying \eqref{HYM}. Set 
\[\Delta(\cF, h_{\cF}):=2rc_2(\cF,h_\cF) -(r-1)c_1(\cF,h_\cF)^2) \quad \mbox{on} \quad  \Xr\setminus Z.\] 
We have 
\begin{equation}
\label{semipositive}
\Delta(\cF, h_{\cF})\wedge \omega^{n-2}\ge0 \quad \mbox{on } \,\, \Xr\setminus Z,
\end{equation}
with equality if and only if $(\cF, h_\cF)$ is projectively unitary flat on $\Xr\setminus Z$, thanks to \eqref{HYM}. More precisely, since $h_{\cF}$ is HE with respect to $\omega$, the term $\Delta(\cF, h_{\cF})\wedge \omega^{n-2}$ coincides up to a dimensional constant with the squared norm of $\Theta(\cF, h_\cF)-\frac 1r \Tr\Theta(\cF, h_\cF) \cdot \mathrm{Id}_\cF$, see e.g. the proof of \cite[Theorem~4.4.7]{Koba}.

We saw in Proposition~\ref{factor} that one can write $\omega_X^{n-2}=\widehat\Omega+d \gamma$ with $\widehat \Omega, \gamma$ smooth forms and $\widehat\Omega$ compactly supported away from $Z$.  Without loss of generality, one can assume that $\mathrm{Supp}(\widehat\Omega)\subset W'$ so that $h_{\cF}$ is orbifold smooth on $\mathrm{Supp}(\widehat\Omega)$. In particular, we have 
\begin{equation}
\label{c2 eq}
(2rc_2(\cF) -(r-1)c_1(\cF)^2)\cdot [\omega_X]^{n-2} = \int_{X\setminus Z} \Delta(\cF, h_{\cF})\wedge \widehat\Omega
\end{equation}
 Now, recall from \eqref{omega} that $\omega=\omega_X+dd^c \varphi$. In particular, one can write 
\[\omega^{n-2}=\omega_X^{n-2}+d \tau, \quad \mbox{with} \,\, \tau:=d^c \varphi\wedge \sum_{k=0}^{n-3} \omega^k \wedge \omega_X^{n-3-k}.\]
From \eqref{qiso}, it is elementary to check that $d^c \varphi$ and $\omega_X$ are bounded with respect to $\omega$ and that $\tau$ vanishes near $Z$. In particular, if we set $\alpha:=\gamma+\tau$, then we have 
\begin{equation}
\label{alpha}
\omega^{n-2}=\widehat\Omega+d\alpha \quad \mbox{with} \quad \alpha\in \cE^{n-2, n-3}(X).
\end{equation}
 By Theorem~\ref{closed}, we infer
\begin{equation}
\label{vanishing Delta}
\int_{\Xr\setminus Z} \Delta(\cF, h_{\cF})\wedge d\alpha =0.
\end{equation}
Putting \eqref{c2 eq}, \eqref{alpha} and \eqref{vanishing Delta} together, we get
Therefore, 
\[(2rc_2(\cF) -(r-1)c_1(\cF)^2)\cdot [\omega_X]^{n-2} =\int_{\Xr\setminus Z} \Delta(\cF, h_{\cF})\wedge \om^{n-2}\]
and the first part of the theorem follows from \eqref{semipositive}.

In the equality case, we get that $\Theta(\cF, h_\cF)=\frac 1r \Tr\Theta(\cF, h_\cF) \cdot \mathrm{Id}_\cF$ on $\Xr\setminus Z$. That is, $(\cF, h_\cF)|_{\Xr\setminus Z}$ is projectively hermitian flat. The second part of the theorem can now be  obtained along the same lines as in the proof of Corollary~5.4 in \cite{GP25}.   
\end{proof}

\section{Application to uniformization}

In this section, we explain how to use the main result to deduce a numerical characterization of (possibly) singular complex torus quotients. The result was first proved in the projective case by \cite{GKP16, LuTaji} and then generalized to the Kähler setting \cite{CGG, CGG24} yet relying on the algebraic case and the Beauville-Bogomolov decomposition \cite{BGL}.  

\begin{thm}
Let $X$ be a compact Kähler variety of dimension $n$ with log terminal singularities and $c_1(X)=0 \in H^2(X, \mathbb R)$. The following are equivalent:
\begin{enumerate}
\item There exists a quasi-étale cover $T\to X$ where $T$ is a complex torus. 
\item There exists a Kähler class $\alpha$ such that $c_2(X)\cdot \alpha^{n-2}=0$. 
\end{enumerate}
\end{thm}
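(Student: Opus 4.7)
The implication $(1)\Rightarrow(2)$ is straightforward: if $\pi\colon T\to X$ is a quasi-étale cover with $T$ a complex torus, then $\pi$ is étale outside a codimension $\ge 2$ locus, hence $\pi^{[*]}T_X \simeq T_T$ is trivial and $c_2(T)\cdot(\pi^*\alpha)^{n-2}=0$; by multiplicativity of orbifold Chern numbers under quasi-étale covers, one concludes $c_2(X)\cdot\alpha^{n-2}=0$.

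For $(2)\Rightarrow(1)$ my plan is to combine Theorem~\ref{thmA} with two classical ingredients: the existence of a singular Ricci-flat Kähler-Einstein metric $\omega\in\alpha$ (Eyssidieux-Guedj-Zeriahi and its Kähler extensions), and the $\alpha$-polystability of the reflexive tangent sheaf $T_X$ for klt Kähler varieties with $c_1(X)=0$. Observe first that klt singularities are quotient singularities in codimension two, so there exists a closed analytic $Z\subset X$ of codimension $\ge 3$ with $X\setminus Z$ an orbifold and $T_X|_{X\setminus Z}$ an orbibundle, placing us in the setup of Theorem~\ref{thmA}. Decompose $T_X=\bigoplus_i\cF_i$ into $\alpha$-stable reflexive summands of rank $r_i$ and slope zero, so that $c_1(\cF_i)\cdot\alpha^{n-1}=0$ and, by the Hodge index theorem, $c_1(\cF_i)^2\cdot\alpha^{n-2}\le 0$. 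Using $c_1(T_X)=\sum_i c_1(\cF_i)=0$, a direct expansion gives
\[
c_2(T_X)\cdot\alpha^{n-2}=\sum_i c_2(\cF_i)\cdot\alpha^{n-2}-\tfrac{1}{2}\sum_i c_1(\cF_i)^2\cdot\alpha^{n-2}.
\]
Combining the hypothesis $c_2(T_X)\cdot\alpha^{n-2}=0$ with the Bogomolov-Gieseker inequalities $(2r_ic_2(\cF_i)-(r_i-1)c_1(\cF_i)^2)\cdot\alpha^{n-2}\ge 0$ from Theorem~\ref{thmA} forces both $c_1(\cF_i)^2\cdot\alpha^{n-2}=0$ and $c_2(\cF_i)\cdot\alpha^{n-2}=0$ for every $i$. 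Hence Bogomolov-Gieseker is an equality for each summand; its characterization yields that $\cF_i|_{\Xr}$ is locally free and projectively unitary flat, and consequently so is $T_X|_{\Xr}$.

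To finish, I would pass to a finite quasi-étale cover $X'\to X$ trivializing $\det T_X$ (which exists because $c_1(X)=0$ combined with klt implies $K_X$ is torsion), so that $T_{X'}|_{X'_{\rm reg}}$ becomes honestly flat. A holonomy argument applied to the Ricci-flat metric on $X'_{\rm reg}$, together with the classical rigidity for Kähler manifolds with flat holomorphic tangent bundle, would then produce a further finite étale cover that is a compact complex torus. The hard part is precisely this last upgrade: one must propagate the flat structure across the codimension $\ge 2$ singular locus and turn it into an honest uniformization. The projective proofs \cite{GKP} and \cite{LuTaji} rely on a singular Lefschetz hyperplane theorem; in the present Kähler setting I would instead exploit the partial orbifold desingularisation $\widehat{X}\to X$ already used in the proof of Theorem~\ref{thmA}, on which the pulled-back tangent sheaf becomes a genuinely flat orbibundle whose monodromy can be analysed directly, thereby avoiding any appeal to the algebraic Beauville-Bogomolov decomposition.
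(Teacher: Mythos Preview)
Your argument up to the point where each $\cF_i|_{\Xr}$ is locally free and projectively unitary flat is essentially the paper's, with one cosmetic difference: the paper uses that the summands $\cF_i$ in the polystable decomposition of $T_X$ (from \cite{GSS}) already satisfy $c_1(\cF_i)=0$ as real classes, so your detour through the Hodge index inequality is correct but unnecessary.

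The genuine gap is in your endgame. You rightly flag the difficulty of ``propagating the flat structure across the singular locus,'' but the proposed fix via the partial orbifold resolution $g\colon\widehat X\to X$ does not work as stated. First, while the hermitian flat structure on $T_X|_{\Xr}$ does extend to a flat orbibundle on $X\setminus Z$ (the local uniformizing covers are smooth and one extends across codimension $\ge 2$), there is no reason for it to extend further across $\mathrm{Exc}(g)$, which is a \emph{divisor}: flat connections do not automatically extend in codimension one. Second, even if such an extension existed, the resulting object would be $g^{[*]}T_X$, not $T_{\widehat X}$, so its monodromy says nothing direct about the uniformization of $\widehat X$ or of $X$; you would still need an argument showing that $X$ (or a cover) is actually smooth.

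The paper's route is different, and this is where the substance lies. It invokes the analytic extension of \cite{GKP16} (available thanks to the relative MMP for projective morphisms between K\"ahler spaces \cite{Fuj22, LM22, DHP}) to produce a finite quasi-\'etale Galois cover $p\colon Y\to X$ on which each $p^{[*]}\cF_i$ is honestly flat and locally free, so that $T_Y=p^{[*]}T_X$ is locally free. The Zariski--Lipman conjecture for klt spaces \cite{Dru14, GKKP} then forces $Y$ to be \emph{smooth}, and one concludes by Yau's classical theorem on the smooth $Y$ with $c_1(Y)=0$ and $c_2(Y)\cdot p^*\alpha^{n-2}=0$. This two-step passage --- first the GKP-type extension theorem to make the sheaves locally free on a cover, then Zariski--Lipman to kill the singularities --- is precisely what your holonomy/partial-resolution sketch does not supply.
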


Just as in \cite{CGG24}, one can deduce from the theorem above its "pair version", that is,  characterizing quotients $T/G$ with $G$ a finite group of automorphism possibly having fixed points in codimension one, cf \cite[Corollary~7]{CGG24}. 
\begin{proof}[Sketch of proof]
We know from \cite{GSS} that $T_X$ is $\alpha$-polystable, hence has a decomposition $T_X=\bigoplus \cF_i$ into stable pieces with trivial first Chern class. From the BG inequality, we see that $c_2(\cF_i)\cdot \alpha^{n-2}=0$ for all indices $i$. The equality case implies that $\cF_i|_{\Xr}$ is unitary flat. By the analytic version of \cite{GKP16} (available thanks to relative MMP for projective morphisms  due to \cite{Fuj22}, \cite{LM22}, see also \cite{DHP}), there exists a finite quasi-étale Galois cover $p:Y\to X$ such that $p^{[*]}\cF_i$ is flat, hence locally free. Since this holds for all indices $i$ and $p^{[*]}T_X=T_Y$, we see that $T_Y$ is locally free. In particular, it follows from the resolution of Zariski-Lipman conjecture for klt varieties \cite{Dru14, GKKP} that $Y$ is smooth and satisfies $c_2(Y)\cdot p^*\alpha^{n-2}=0$. Thanks to Yau's theorem, $Y$ is a torus quotient. 
\end{proof}

 \bibliographystyle{smfalpha}
\bibliography{biblio}

\end{document}